\def\swappedhead#1#2#3{%
  \thmnumber{\@upn{\the\thm@headfont#2\@ifnotempty{#1}{~}}}%
  \thmname{#1}%
  \thmnote{ {\the\thm@notefont(#3)}}}
\theoremstyle{plain}
\newtheorem{theorem}{Theorem}[section]
\newtheorem{lemma}[theorem]{Lemma}
\newtheorem{corollary}[theorem]{Corollary}
\newtheorem{proposition}[theorem]{Proposition}
\theoremstyle{definition}
\newtheorem{definition}[theorem]{Definition}
\theoremstyle{remark}
\newtheorem{example}[theorem]{Example}
\newtheorem{remark}[theorem]{Remark}
\newtheorem{note}[theorem]{Note}
\newtheorem{observation}[theorem]{Observation}
\newcommand{\Span}{\operatorname{span}}
\newcommand{\betrag}[1]{\left|#1\right|}
\newcommand{\gklammer}[1]{\left\{#1\right\}}
\newcommand{\BIGOP}[1]{\mathop{\mathchoice%
{\raise-0.22em\hbox{\huge $#1$}}%
{\raise-0.05em\hbox{\Large $#1$}}{\hbox{\large $#1$}}{#1}}}
\newcommand{\bigtimes}{\BIGOP{\times}}
\newcommand{\BIGboxplus}{\mathop{\mathchoice%
{\raise-0.35em\hbox{\huge $\boxplus$}}%
{\raise-0.15em\hbox{\Large $\boxplus$}}{\hbox{\large
    $\boxplus$}}{\boxplus}}}
\newenvironment{env}[2]{\begin{#1}#2\end{#1}}{}
    \newcommand{\beq}[1]{\begin{env}{equation}{#1}}
    \newcommand{\beqn}[1]{\begin{env}{equation*}{#1}}
    \newcommand{\bal}[1]{\begin{env}{align}{#1}}
    \newcommand{\baln}[1]{\begin{env}{align*}{#1}}
    \newcommand{\bga}[1]{\begin{env}{gather}{#1}}
    \newcommand{\bgan}[1]{\begin{env}{gather*}{#1}}
    \newcommand{\bflal}[1]{\begin{env}{flalign}{#1}}
    \newcommand{\bflaln}[1]{\begin{env}{flalign*}{#1}}
    \newcommand{\bmu}[1]{\begin{env}{multline}{#1}}
    \newcommand{\bmun}[1]{\begin{env}{multline*}{#1}}
    \newcommand{\bsp}[1]{\begin{env}{split}{#1}}
    \newcommand{\eeq}{\end{env}}
    \newcommand{\eeqn}{\end{env}}
    \newcommand{\eal}{\end{env}}
    \newcommand{\ealn}{\end{env}}
    \newcommand{\ega}{\end{env}}
    \newcommand{\egan}{\end{env}}
    \newcommand{\eflal}{\end{env}}
    \newcommand{\eflaln}{\end{env}}
    \newcommand{\emu}{\end{env}}
    \newcommand{\emun}{\end{env}}
    \newcommand{\esp}{\end{env}}
\newcommand{\lf}{\vspace{2ex}}
\newcommand{\SQ}[1]{\vspace{-4ex}\begin{quotation}\renewcommand{\baselinestretch}{1}\small\noindent\ignorespaces#1\end{quotation}}
\renewcommand{\bf}[1]{\textbf{#1}}
\renewcommand{\it}[1]{\textit{#1}}
\renewcommand{\sf}[1]{\textsf{#1}}
\renewcommand{\tt}[1]{\texttt{#1}}
\newcommand{\hl}[1]{\bf{\it{#1}}}
\newcommand{\msf}[1]{\text{\small $\sf{#1}$}}
\newcommand{\cmc}[1]{\mathcal{#1}}
\newcommand{\eus}[1]{\mathscr{#1}}
\newcommand{\bb}[1]{\mathbb{#1}}
\newcommand{\mscriptsize}[1]{{\setlength{\arraycolsep}{.3ex}\text{\scriptsize$#1$}}}
\newcommand{\nbd}[1]{$#1$\nobreakdash--}
\newcommand{\ul}[1]{\underline{#1}}
\newcommand{\wh}[1]{\widehat{#1}}
\newcommand{\bfam}[1]{\bigl(#1\bigr)}
\newcommand{\CB}[1]{\{#1\}}
\newcommand{\bCB}[1]{\bigl\{#1\bigr\}}
\newcommand{\BCB}[1]{\Bigl\{#1\Bigr\}}
\newcommand{\SB}[1]{[#1]}
\newcommand{\RO}[1]{[#1)}
\newcommand{\Matrix}[1]{\begin{pmatrix}#1\end{pmatrix}}
\newcommand{\sMatrix}[1]{\mscriptsize{\Matrix{#1}}}
\newcommand{\set}[2][]{
    \ifthenelse{\equal{#1}{}}{
        \CB{#2}}{
        \CB{#1~|~#2}}}
\newcommand{\bset}[2][]{
    \ifthenelse{\equal{#1}{}}{
        \bCB{#2}}{
        \bCB{#1~|~#2}}}
\newcommand{\Bset}[2][]{
    \ifthenelse{\equal{#1}{}}{
        \BCB{#2}}{
        \BCB{#1~\big|~#2}}}
\DeclareMathOperator{\id}{\normalfont\msf{id}}
\renewcommand{\dim}{\operatorname{\normalfont\msf{dim}}}
\newcommand{\C}{\bb{C}}
\newcommand{\N}{\bb{N}}
\newcommand{\R}{\bb{R}}
\newcommand{\bS}{\bb{S}}
\newcommand{\Z}{\bb{Z}}
\newcommand{\cB}{\cmc{B}}
\newcommand{\sB}{\eus{B}}
\newcommand{\sF}{\eus{F}}
\newcommand{\sP}{\eus{P}}
\newcommand{\sW}{\eus{W}}
\newcommand{\G}{\Gamma}
\newcommand{\WS}{{\eus{WS}}}
\newcommand{\botimes}{\varogreaterthan}
\newcommand{\btimes}{{>}}
\newcommand{\eset}{\emptyset}
\numberwithin{equation}{section}
\begin{document}

\title[Subproduct Systems and Cartesian Systems]{Subproduct Systems and Cartesian Systems;\\new results on factorial languages\\ and their relations with other areas}

\subjclass[2010]{
  46L57,
  68R15,
  05A05,
  46L55,
  05A15.
}

\keywords{(Finite-dimensional) subproduct systems; combinatorics of words.}

\author[Malte Gerhold]{Malte Gerhold*}
\address{MG, Institut für Mathematik und Informatik\\Universität Greifswald\\
17487 Greifswald, Germany}
\email{\href{mailto:mgerhold@uni-greifswald.de}{\tt{mgerhold@uni-greifswald.de}}}
\thanks{* The work of M.~Gerhold is partially supported by the DFG, project no. 397960675.
}
\author{Michael Skeide}
\address{MS, Dipartimento di Economia, Universit\`a\ degli Studi del Molise, Via de Sanctis, 86100 Campobasso, Italy}
\email{\href{mailto:skeide@unimol.it}{\tt{skeide@unimol.it}}}


{

\begin{abstract}
\noindent
We point out that a sequence of natural numbers is the \it{dimension sequence} of a \it{subproduct system} if and only if it is the \it{cardinality sequence} of a \it{word system} (or \emph{factorial language}). Determining such sequences is, therefore, reduced to a purely combinatorial problem in the combinatorics of words. A corresponding (and equivalent) result for graded algebras has been known in abstract algebra, but this connection with pure combinatorics has not yet been noticed by the product systems community.  We also introduce \it{Cartesian systems}, which can be seen either as a set theoretic version of subproduct systems or an abstract version of word systems. Applying this, we provide several new results on the cardinality sequences of word systems and the dimension sequences of subproduct systems.
\end{abstract}
}

\maketitle


\section{Introduction}
\label{sec:introduction}

Let $\bS$ denote the additive semigroup $\R_+=\RO{0,\infty}$ (\hl{continuous time} case) or the additive semigroup $\N_0=\CB{0,1,\ldots}$ (\hl{discrete} case). A \it{product system} of Hilbert spaces (Arveson \cite{Arv89}) is, roughly speaking, a family of Hilbert spaces $H_t$ with associative identifications
\beqn{\label{*}\tag{$*$}
H_s\otimes H_t
~=~
H_{s+t}.
}\eeqn
For a \it{subproduct system} (Shalit and Solel \cite{ShaSo09}, and Bhat and Mukherjee \cite{BhMu10}), this is weakened to
\beqn{\label{**}\tag{$**$}
H_s\otimes H_t
~\supset~
H_{s+t}.
}\eeqn
See Section~\ref{sec:prod-subpr-syst} for precise definitions.

Let $d_t:=\dim H_t$ denote the dimension of the \hl{fiber} $H_t$. Simple cardinal arithmetic shows that in a continuous time product system (henceforth, \hl{Arveson system}) the dimensions must be a constant $d:=d_t$ for $t\in(0,\infty)$. For a discrete product system we get $d_t=d_1^t$ for all $t\in\N_0$. Discrete product systems are classified by $d_1$, and $d_1$ can be any cardinal number; see Example~\ref{discPSex}. For Arveson systems, the constant $d$ can be $0$ or $1$ or any infinite cardinal number. While the finite-dimensional cases $d=0$ and $d=1$ (under measurability conditions) are understood, still after more than 20 years the case $d=\aleph_0$ seems hopeless; see Example~\ref{Fockex}. In either case, continuous time and discrete, when looking at subproduct systems we only get the very rough limitation $d_sd_t\ge d_{s+t}$, and the situation gets even more involved.

Subproduct systems generate product systems, in the sense that each subproduct system ``sits inside'' a product system in an essentially unique way; see Bhat and Mukherjee \cite{BhMu10}. In the 2009 Oberwolfach Mini-Workshop on ``Product Systems and Independence in Quantum Dynamics'' \cite{BFS09r}, Bhat suggested to try to classify at least the finite-dimensional subproduct systems and the product systems they generate. Finite-dimensional subproduct systems occurred in several ways. For instance, every CP-semi\-group on the \nbd{n\times n}matrices $M_n$ gives rise to its finite-dimensional subproduct system of \it{Arveson-Stinespring correspondences} (which, for $M_n$, are Hilbert spaces); see Shalit and Solel \cite{ShaSo09}. Moreover, every subproduct system (finite-dimensional or not) arises in this way from a normal CP-semigroup on $\sB(H)$; see again \cite{ShaSo09}. Other examples arise from homogeneous relations on polynomials in several variables; see Davidson, Ramsey and Shalit \cite{DRS11}. Also a subclass of \it{interacting Fock spaces} gives rise to finite-dimensional subproduct systems and further generalizes the notion of subproduct system; see Gerhold and Skeide \cite{GeSk13p}. Tsirelson has determined the structure of two-dimensional discrete subproduct systems \cite{Tsi09p1} and of two-dimensional continuous time subproduct systems \cite{Tsi09p2} and the product systems they generate. He exploits that subproduct systems also may be viewed as graded algebras; see also Section~\ref{sec:excluded-words}.

In these notes, we are interested in finite-dimensional discrete subproduct systems. However, instead of trying to classify them up to isomorphism, we ask which are the possible \hl{dimension sequences} $d_n=\dim H_n$. In Theorem~\ref{sec:cart-syst-word:main-theorem}, we formulate the crucial result that this problem is equivalent to the problem of finding the possible \it{cardinality sequences} of \it{word systems} over finite alphabets. (The idea of the proof is not new.\footnote{Every discrete subproduct system $(H_n)_{n\in\mathbb N_0}$ induces the structure of a \emph{standard graded algebra} on $\bigoplus_{n\in\mathbb N_0} H_n$, that is, $H_mH_n=H_{m+n}$ and $H_0=\mathbb C 1$, and it is shown by Anick \cite{Ani82} that for every standard graded algebra with dimension sequence $d_n$ there is a word system with the same sequence as cardinality sequence. Before Anick,  Stanley proved the same in the commutative case \cite{Sta78} and attributes the result to MacAulay \cite{McA27}. Our presentation here appears to be easier to follow than the original work of Anick \cite{Ani82}.})

A word system (also known as \it{factorial language}) is, roughly speaking, a set $X^\btimes$ of words over an alphabet $A$ such that all subwords of a word in $X^\btimes$ again are words in $X^\btimes$. It is well known that $X^\btimes$ is the set of all words in $A$ that do not have subwords from a certain (unique reduced) exclusion set $R$. In the latter formulation, determining cardinality sequences is a long-known and in general unsolved problem in the combinatorics of words; see, for instance, Odlyzko \cite{Odl85}. We survey these and other known results in Section~\ref{sec:excluded-words}.

Each subproduct system is isomorphic to a subproduct system in \it{standard form}. The new notion of \it{Cartesian system} is obtained, roughly, by replacing in the notion of subproduct systems Hilbert spaces with sets, tensor products with set products, and isometries with injections. In Section~\ref{CSsec}, where we study Cartesian systems and present our proof of Theorem~\ref{sec:cart-syst-word:main-theorem}. We also point out that word systems are to Cartesian systems what subproduct systems in standard form are to subproduct systems. Likewise, we show (Corollary~\ref{CSisoWScor}) that every Cartesian system is isomorphic to a Cartesian system in standard form, that is to a word system. In Section~\ref{sec:card-sequ-cart}, we present some new results. Some of these results require, as an intermediate step, to know Cartesian systems.  Therefore, the new notion of Cartesian systems promises to be a fresh contribution to combinatorics, too.  In Section~\ref{sec:count-with-word}, we exploit the fact that directed graphs (not multigraphs, but possibly with loops) give a subclass of word systems, the \it{graph systems}. These provide examples which show that some of the criteria in Section~\ref{sec:card-sequ-cart} are not necessary and others are not sufficient. The new result Theorem \ref{theo:local_to_global} asserts that for being able to realize a sequence $d_1,d_2,\ldots$ with a word system, it is sufficient ``to realize each finite part $d_1,\ldots,d_k$ $(k\in\N)$''.

To summarize: In these notes we present a proof for the---in principle known, but yet unrecognized in this context---result  that the dimension sequences of subproduct systems are precisely the cardinality sequences of word systems (Theorem~\ref{sec:cart-syst-word:main-theorem}). In the remaining sections, we make the connection with existing literature on the combinatorial problem and we present some fresh results. Most noteworthy are results based on the new and more flexible notion of Cartesian system. Relations with CP-semigroups on $M_n$ and graph \nbd{C^*}algebras are imminent, but will have to wait for future work. We, too, do not tackle the problem to classify subproduct systems to a fixed dimension sequence.


\section{Product systems and subproduct systems}
\label{sec:prod-subpr-syst}

A precise version of the identification in \eqref{*} is as follows.

\begin{definition}\label{def:product_system}
A \hl{product system} (over $\bS$) is a family $H^\otimes=\bfam{H_t}_{t\in\bS}$ of Hilbert spaces $H_t$ with $H_0=\C$ and with unitaries
\beqn{
u_{s,t}
\colon
H_s\otimes H_t
~\longrightarrow~
H_{s+t}
}\eeqn
such that the \hl{product} $x_sy_t:=u_{s,t}(x_s\otimes y_t)$ is associative and such that $u_{0,t}$ and $u_{t,0}$ are the canonical identifications.
\end{definition}

\SQ{
\begin{note}
Arveson \cite{Arv89} gave the first formal definition of product system (including also some technical conditions) of Hilbert spaces. He showed how to construct such \it{Arveson systems} from so-called normal \it{\nbd{E_0}semigroups} (semigroups of normal unital endomorphisms) over $\bS=\R_+$ on $\sB(H)$. Bhat \cite{Bha96} generalized this to normal \it{Markov semigroups} (semigroups of normal unital CP-maps) on $\sB(H)$, by dilating the Markov semigroup in a unique minimal way to an \nbd{E_0}semigroup and computing the Arveson system of the latter. Product systems of \it{correspondences} (that is, Hilbert bimodules) occur first in Bhat and Skeide \cite{BhSk00}. They constructed directly from a Markov semigroup on a unital \nbd{C^*}algebra or a von Neumann algebra $\cB$ a product system of correspondences over $\cB$, and used it to construct the minimal dilation. Muhly and Solel \cite{MuSo02} constructed from a Markov semigroup on a von Neumann algebra $\cB$ a product system over the commutant of $\cB$. This product system turned out to be the \it{commutant} (see Skeide \cite{Ske03c,Ske09,Ske08}) of the product system constructed in \cite{BhSk00}.
\end{note}
}

\begin{example}\label{Fockex}
For the \hl{continuous time case}, $\bS=\R_+$, a particular class of examples is given by families of symmetric Fock spaces $H_t:=\G(L^2(\RO{0,t},K))$ for some fixed Hilbert space $K$. The product is given by the following chain of canonical isomorphisms $H_s\otimes H_t\cong\G(L^2(\RO{t,s+t},K))\otimes H_t\cong H_{s+t}$. In principle, product systems of this type are known since Streater \cite{Str69}, Araki \cite{Ara70}, Guichardet \cite{Gui72}, or Parthasarathy and Schmidt \cite{PaSchm72}. There are by far more examples of Arveson systems than only Fock spaces; for instance, Tsirelson \cite{Tsi00p1,Tsi00p2}, Liebscher \cite{Lie09}, Powers \cite{Pow04}, Bhat and Srinivasan \cite{BhSr05}, and Izumi and Srinivasan \cite{IzSr08}. As we will mainly discuss the discrete case, we do not go into details.
\end{example}

\begin{example}\label{discPSex}
Discrete product systems $H^\otimes=\bfam{H_n}_{n\in\N_0}$ are easy to understand. If we identify $H_n$ with $H_1^{\otimes n}$ $(n\ge1)$ via the inverse of the unitary determined by $x_n\otimes\ldots\otimes x_1\mapsto x_n\ldots x_1$, it is clear that the product of $H^\otimes=\bfam{H_1^{\otimes n}}_{n\in\N_0}$ is nothing but the \it{tensor} product $(x_m\otimes\ldots\otimes x_1)(y_n\otimes\ldots\otimes y_1)=x_m\otimes\ldots\otimes x_1\otimes y_n\otimes\ldots\otimes y_1$. By mentioning that we are working in a tensor category, this is nothing but the identification map $H_1^{\otimes m}\otimes H_1^{\otimes n}\equiv H_1^{\otimes(m+n)}$. We say that a (discrete) product system $H^\otimes=\bfam{H^{\otimes n}}_{n\in\N_0}$ with the identity as product is in \hl{standard form}.
\end{example}

As far as discrete product systems are concerned, there is not more to be said than what is said in the preceding example. The situation gets more interesting for \it{subproduct systems}. A precise version of the identification in \eqref{**} is as follows.

\begin{definition}\label{def:subproduct_system}
A \hl{subproduct system} (over $\bS$) is a family $H^\botimes=\bfam{H_t}_{t\in\bS}$ of Hilbert spaces $H_t$ with $H_0=\C$ and with coisometries
\beqn{
w_{s,t}
\colon
H_s\otimes H_t
~\longrightarrow~
H_{s+t}
}\eeqn
such that the \hl{product} $x_sy_t:=w_{s,t}(x_s\otimes y_t)$ is associative and such that $w_{0,t}$ and $w_{t,0}$ are the canonical identifications.
\end{definition}

It is more common to write subproduct systems with the adjoint isometries $v_{s,t}:=w_{s,t}^*\colon H_{s+t}\to H_s\otimes H_t$, which have to fulfill the coassociativity and marginal conditions expressed in the following two diagrams.

\begin{equation}
\label{D1}
  \begin{tikzcd}[column sep=tiny]
    &H_{r+s+t}\dlar[swap]{v_{r+s,t}}\drar{v_{r,s+t}}&\\
    H_{r+s}\otimes H_t\drar[swap]{v_{r,s}\otimes\operatorname{id}_{H_t}}&&H_{r}\otimes H_{s+t}\dlar{\operatorname{id}_{H_r}\otimes v_{s,t}}\\
    &H_{r}\otimes H_s \otimes H_t
  \end{tikzcd}
\end{equation}
\begin{equation}
\label{D2}
  \begin{tikzcd}
{} &H_{t}\dlar[swap]{v_{0,t}}\drar{v_{t,0}}\dar{\operatorname{id}}&
\\
    H_{0}\otimes H_t\rar{\cong}&H_t&H_{t}\otimes H_{0}\lar[swap]{\cong}
  \end{tikzcd}  
\end{equation}
The isometries $v_{s,t}$ emphasize the idea, informally expressed in \eqref{**}, of considering $H_{s+t}$ as a subspace of $H_s\otimes H_t$.

\SQ{\lf
\begin{note}
Starting from basic examples in single operator theory, experience shows that \it{dilations} of ``irreversible'' things (like contractions, CP-maps or CP-semigroups, and so forth) to ``less irreversible'' ones (like isometries or unitaries, endomorphism semigroups or automorphism semigroups, and so forth) can be obtained by constructions that involve inductive limits. Similarly, in a number of instances, it occurred that in order to obtain a product system, one, first, has to construct a subproduct system and, then, perform a suitable inductive limit. The first construction of this type is probably Schürmann's reconstruction theorem for \hl{quantum Lévy processes} on bialgebras starting from the GNS-constructions for the marginal distributions of the process; see Schürmann \cite[Section~1.9, pp.\ 38-40]{MSchue93}. Bhat and Skeide \cite{BhSk00} construct the product system of a CP-semigroup starting from the GNS-correspondences of the individual CP-maps, while Muhly and Solel \cite{MuSo02} start from the Stinespring construction enriched by Arveson's \it{commutant lifting} \cite{Arv69}. Skeide \cite{Ske06d} constructed the \hl{product} of spatial product systems by an inductive limit over a subproduct system.

Almost at the same time, Shalit and Solel \cite{ShaSo09} (motivated by \cite{BhSk00} and \cite{MuSo02}), and Bhat and Mukherjee \cite{BhMu10} (motivated by \cite{Ske06d}) formalized this pre-product system structure, calling it \it{subproduct system} (\cite{ShaSo09}, immediately for correspondences) or \it{inclusion system} (\cite{BhMu10}, for Hilbert spaces). The purposes in \cite{ShaSo09} and \cite{BhMu10} are different, the results numerous. Shalit and Solel \cite{ShaSo09} also consider subproduct systems over more general monoids $\bS$, generalizing the corresponding definition of product systems by Fowler \cite{Fow02}.
\end{note}
}

\noindent
It is a basic problem to embed a subproduct system into a product system. Bhat and Mukherjee \cite[Theorem 5]{BhMu10}, inspired by the inductive limit constructions in \cite{MSchue93,BhSk00,MuSo02,Ske06d}, formalized this idea:

\begin{theorem}\label{SPS->PSthm}
Every continuous time subproduct system is isomorphic to a subproduct subsystem of a product system. The product subsystem \hl{generated} by the subproduct system is determined up to a unique isomorphism intertwining the contained subproduct system.
\end{theorem}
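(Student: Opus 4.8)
The plan is to build the generated product system explicitly as an inductive limit over partitions, following the inductive-limit philosophy of the Note above. Fix $t\in\R_+$ and let $J_t$ be the set of finite tuples $\pi=(t_1,\ldots,t_n)$ of strictly positive reals with $t_1+\cdots+t_n=t$, directed by refinement. To $\pi$ I associate the Hilbert space $H_\pi:=H_{t_1}\otimes\cdots\otimes H_{t_n}$. If $\pi'$ refines $\pi$, then iterating the isometries $v_{s,t}\colon H_{s+t}\to H_s\otimes H_t$ on precisely the blocks that get subdivided produces an isometry $H_\pi\to H_{\pi'}$. The coassociativity square \eqref{D1} is exactly what guarantees that this isometry does not depend on the order in which blocks are subdivided and that the maps compose correctly along chains of refinements; hence $\family{H_\pi}_{\pi\in J_t}$ is an inductive system of Hilbert spaces with isometric connecting maps. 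I define $\wt H_t$ to be its inductive limit, completed to a Hilbert space, with $\wt H_0:=\C$. The coarsest tuple $(t)$ gives a canonical isometry $H_t\hookrightarrow\wt H_t$, and the marginal identities \eqref{D2} ensure that inserting blocks of length $0$ changes nothing, so the construction is consistent.

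Next I would introduce the product. For $s,t\in\R_+$, concatenation $(\pi,\sigma)\mapsto\pi*\sigma$ maps a tuple for $s$ and a tuple for $t$ to a tuple for $s+t$, and at the level of Hilbert spaces it is just the canonical identification $H_\pi\otimes H_\sigma=H_{\pi*\sigma}$. Since the Hilbert-space tensor product commutes with these inductive limits, passing to the limit yields an isometry $\wt u_{s,t}\colon\wt H_s\otimes\wt H_t\to\wt H_{s+t}$. The decisive point is that $\wt u_{s,t}$ is \emph{onto}: the concatenated tuples are precisely the partitions of $s+t$ possessing a breakpoint at $s$, and these are cofinal in $J_{s+t}$ because any tuple can be refined to include $s$ among its breakpoints. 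Thus the image of $\wt u_{s,t}$ is dense and closed, hence everything, so $\wt u_{s,t}$ is unitary. Associativity of $\family{\wt u_{s,t}}$ reduces to associativity of concatenation together with \eqref{D1}, and the marginal conditions to \eqref{D2}; therefore $\wt H^\otimes=\family{\wt H_t}_{t\in\R_+}$ is a product system, and by construction the isometric inclusions $H_t\hookrightarrow\wt H_t$ recover $w_{s,t}$ as the compression of $\wt u_{s,t}$ to the subspaces, exhibiting $H^\botimes$ as a subproduct subsystem.

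For the uniqueness statement I would characterize the \hl{generated} product subsystem, inside any ambient product system containing the image of $H^\botimes$, as the closed linear span at each $t$ of all products $x_{t_1}\cdots x_{t_n}$ with $x_{t_i}\in H_{t_i}$ and $t_1+\cdots+t_n=t$. The same cofinality argument shows that in $\wt H^\otimes$ this span already exhausts $\wt H_t$, so $\wt H^\otimes$ \emph{is} the product system generated by $H^\botimes$. Given any other embedding of $H^\botimes$ into a product system $K^\otimes$, the universal property of the inductive limit furnishes a canonical intertwining map $\wt H^\otimes\to K^\otimes$; restricting to the generated subsystem and checking that it is a product-system isomorphism, unique among intertwiners of the embedded subproduct system, finishes the proof.

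I expect the main obstacle to be the coherence bookkeeping that turns the refinement maps into a genuine inductive system and the concatenation into a well-defined unitary product: one must verify, once and for all, that iterating $v_{s,t}$ over subdivided blocks is independent of the chosen order of subdivisions (this is where \eqref{D1} does all the work) and that the product descends to the limits and is surjective (this is the cofinality of partitions with a prescribed breakpoint). None of these steps is individually deep, but keeping the continuous family of partitions coherent is where the care must go.
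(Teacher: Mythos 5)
Your proposal is correct and follows essentially the same route as the proof the paper relies on: the theorem is quoted from Bhat and Mukherjee \cite{BhMu10}, whose argument is exactly this inductive limit of $H_{t_1}\otimes\cdots\otimes H_{t_n}$ over the directed set of interval partitions, with coassociativity giving coherence of the refinement isometries, cofinality of partitions with a prescribed breakpoint giving surjectivity of the product, and the universal property of the inductive limit giving the uniqueness of the generated product subsystem. The paper itself does not reproduce this proof but only cites it, so there is nothing further to compare.
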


Here, an \hl{isomorphism} of subproduct systems is a family of unitaries, intertwining the products. (If one of the subproduct systems is a product system, then so is the other and the isomorphism is actually an isomorphism of product systems.) For the definition of a \hl{subproduct subsystem} $H'^\botimes$ of a subproduct system $H^\botimes$, there is the subtlety that we have to distinguish between invariance of the family of Hilbert subspaces $H'_t\subset H_t$ for the coisometric product maps $w_{s,t}$ and invariance for the isometric coproduct maps $v_{s,t}$. (\it{A priori}, the latter is stronger a condition than the former. See \cite[Definition 5.1]{ShaSo09}.  And \cite{BhMu10} even use the term without definition.) Fortunately, since for product systems the $v_{s,t}$ and the $w_{s,t}$ are unitaries, for
a family of Hilbert subspaces $H'_t\subset H_t$ for 
a product system $H^\otimes$ we have the following obvious equivalence.

\begin{proposition}
~~~$v_{s,t}H'_{s+t}\subset H'_s\otimes H'_t$~~~ if and only if ~~~$w_{s,t}(H'_s\otimes H'_t)\supset H'_{s+t}$.
\end{proposition}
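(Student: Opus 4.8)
The plan is to reduce the claim to the elementary fact that, for a bijection $f$ with inverse $f^{-1}$ and arbitrary subsets $A,B$ of the spaces in question, one has $f(A)\subset B$ if and only if $A\subset f^{-1}(B)$. The point that makes this applicable here is that we are working \emph{inside a product system} $H^\otimes$: by Definition~\ref{def:product_system} the map $w_{s,t}$ is a unitary and $v_{s,t}=w_{s,t}^*$ is its inverse, so that $v_{s,t}w_{s,t}=\operatorname{id}_{H_s\otimes H_t}$ and $w_{s,t}v_{s,t}=\operatorname{id}_{H_{s+t}}$. Note that the two maps being inverse to one another is exactly what fails for a genuine subproduct system, which is why the equivalence is special to the product-system situation.

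First I would fix $s,t$ and abbreviate $v:=v_{s,t}$, $w:=w_{s,t}=v^{-1}$, $K:=H'_{s+t}$, and $L:=H'_s\otimes H'_t$, so that the assertion reads $vK\subset L$ if and only if $K\subset wL$ (the stated inclusion $w_{s,t}(H'_s\otimes H'_t)\supset H'_{s+t}$ being just $K\subset wL$ rewritten). Here $L$ is a closed subspace of $H_s\otimes H_t$ and $K$ one of $H_{s+t}$, and since $v,w$ are (linear, hence inclusion-preserving) bijections, applying either of them to a subspace again yields a subspace.

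For the forward implication, I would assume $vK\subset L$ and apply the inclusion-preserving map $w$ to both sides, obtaining $w(vK)\subset wL$; since $wv=\operatorname{id}_{H_{s+t}}$ we have $w(vK)=K$, whence $K\subset wL$. For the converse, I would assume $K\subset wL$ and apply $v$, which gives $vK\subset v(wL)=(vw)L=L$, using $vw=\operatorname{id}_{H_s\otimes H_t}$. This settles both directions.

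I do not expect a genuine obstacle; the only thing to be careful about is to use that $v$ and $w$ are \emph{mutually inverse} bijections rather than merely adjoint partial isometries. This is precisely the subtlety flagged in the remark preceding the statement, where for a general subproduct subsystem the $v$-invariance is \emph{a priori} strictly stronger than the $w$-invariance; the computation above pinpoints that it is the identities $vw=\operatorname{id}$ and $wv=\operatorname{id}$, available only because $H^\otimes$ is a product system, that collapse the two conditions into one.
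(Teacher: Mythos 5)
Your proof is correct and is exactly the argument the paper has in mind: the paper gives no explicit proof, declaring the equivalence ``obvious'' precisely because for a product system $v_{s,t}$ and $w_{s,t}$ are mutually inverse unitaries, which is the fact you spell out. Your elaboration via $wv=\operatorname{id}$ and $vw=\operatorname{id}$, and your remark that this is what fails for a genuine subproduct system, faithfully expands the paper's one-line justification.
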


\SQ{
\begin{note}
The possibility to prove Theorem~\ref{SPS->PSthm} depends on the order structure of the monoid $\bS$. (It does not matter, instead, if we speak about Hilbert spaces or correspondences.) Roughly speaking, the interval partitions for a properly defined partial order have to form a directed set. (See Shalit and Skeide \cite{ShaSk10p} for details.) Every Markov (or just contractive CP-)semigroup comes along with the subproduct system of GNS or Arveson-Stinespring correspondences. But to find a dilation, it is crucial to embed one of these subproduct systems into a product system. The fact that the interval partitions for the monoids $\N_0^k$ and $\R_+^k$ $(k\ge2)$ are no longer a directed set, motivated \cite{ShaSo09} to define first subproduct systems and to analyze their structure. (See also Shalit and Skeide \cite{ShaSk10p} for details.)
\end{note}
}

\noindent
For discrete subproduct systems, the situation is even simpler: The interval partitions of the segment $\SB{0,n}\cap\N_0$ are a finite lattice and, therefore, have a unique maximum. Together with Example~\ref{discPSex}, which describes the simple structure of discrete product systems, we recover the result \cite[Lemma 6.1]{ShaSo09}:

\begin{theorem}\label{SPS->standthm}
Every discrete subproduct system $H^\botimes=\bfam{H_n}_{n\in\N_0}$ is isomorphic to a subproduct subsystem of a product system $H^\otimes=\bfam{H^{\otimes n}}_{n\in\N_0}$ in standard form (Example \ref{discPSex}).
\end{theorem}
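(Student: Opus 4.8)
The plan is to construct the embedding explicitly by iterating the coproduct maps $v_{s,t}=w_{s,t}^*$, and then to use coassociativity to show that the result does not depend on how we iterate, which is precisely what makes the embedding a morphism of subproduct systems. Write $H:=H_1$ and define maps $\iota_n\colon H_n\to H^{\otimes n}$ recursively by $\iota_0=\operatorname{id}_{\C}$, $\iota_1=\operatorname{id}_H$, and
\[
\iota_n:=(\operatorname{id}_H\otimes\iota_{n-1})\circ v_{1,n-1}\qquad(n\ge2).
\]
Each $\iota_n$ is an isometry, being a composite of the isometry $v_{1,n-1}$ with an ampliation of the isometry $\iota_{n-1}$; in particular $\iota_n$ is unitary onto its image $\iota_n(H_n)\subset H^{\otimes n}$.

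The heart of the argument is to prove, by induction on $n=s+t$, the compatibility identity
\[
\iota_{s+t}=(\iota_s\otimes\iota_t)\circ v_{s,t}\qquad(s,t\in\N_0).
\]
For $s=0$ or $t=0$ this is the marginal condition \eqref{D2} together with $\iota_0=\operatorname{id}_{\C}$, and the decomposition $s=1$ is the recursive definition itself. For the inductive step with $s\ge2$, apply the induction hypothesis to $\iota_{n-1}$ written over $(s-1)+t$, giving $\iota_{n-1}=(\iota_{s-1}\otimes\iota_t)\circ v_{s-1,t}$, substitute this into the definition of $\iota_n$, and regroup the embedding factors, using $\operatorname{id}_H=\iota_1$, to arrive at $(\iota_1\otimes\iota_{s-1}\otimes\iota_t)\circ(\operatorname{id}_H\otimes v_{s-1,t})\circ v_{1,n-1}$. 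Now the coassociativity diagram \eqref{D1} with $r=1$ rewrites $(\operatorname{id}_H\otimes v_{s-1,t})\circ v_{1,(s-1)+t}$ as $(v_{1,s-1}\otimes\operatorname{id}_{H_t})\circ v_{s,t}$; collecting the first two legs back into $\iota_s=(\iota_1\otimes\iota_{s-1})\circ v_{1,s-1}$ yields exactly $(\iota_s\otimes\iota_t)\circ v_{s,t}$. I expect the bookkeeping in this single application of \eqref{D1} to be the only real obstacle: one must track the tensor-leg identifications carefully so that the rewriting lines up simultaneously with the recursive definitions of $\iota_s$ and of $\iota_{n-1}$.

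With the compatibility identity in hand the theorem is immediate. Let $(H^{\otimes n})_{n\in\N_0}$ be the standard product system of $H=H_1$ (Example~\ref{discPSex}), whose coproduct maps are the canonical identifications $H^{\otimes(s+t)}\equiv H^{\otimes s}\otimes H^{\otimes t}$. The identity $\iota_{s+t}=(\iota_s\otimes\iota_t)\circ v_{s,t}$ says precisely that under this identification the subspace $\iota_{s+t}(H_{s+t})$ is carried into $\iota_s(H_s)\otimes\iota_t(H_t)$, so $\bfam{\iota_n(H_n)}_{n\in\N_0}$ is a subproduct subsystem of $(H^{\otimes n})$ in the sense of the preceding Proposition. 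Moreover each $\iota_n$ is unitary onto its image and, by the same identity (equivalently, by passing to adjoints), intertwines the coproducts $v_{s,t}$ and hence the products $w_{s,t}$; therefore $(\iota_n)$ is an isomorphism of $H^\botimes$ onto this subproduct subsystem, as required.
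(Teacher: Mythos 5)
Your proof is correct and takes essentially the route the paper intends: the paper itself gives no detailed argument, only the remark that the interval partitions of $\SB{0,n}\cap\N_0$ form a finite lattice with a unique maximum (the partition into unit intervals) plus a citation of \cite[Lemma 6.1]{ShaSo09}, and your iterated coproduct $\iota_n\colon H_n\to H_1^{\otimes n}$ together with the coassociativity-based compatibility identity $\iota_{s+t}=(\iota_s\otimes\iota_t)\circ v_{s,t}$ is exactly the explicit realization of that idea.
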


Obviously, $H$ can be chosen to be $H_1$.

\begin{definition}\label{def:standard_form}
We say a subproduct subsystem of a product system in standard form is in \hl{standard form}.
\end{definition}


\section{Cartesian systems and word systems}\label{CSsec}

\it{Cartesian systems} are the analogue of subproduct systems, where we replace Hilbert spaces $H_n$ with sets $X_n$ and tensor products with set products (Definition~\ref{CSdefi}). \it{Word systems} are Cartesian systems in standard form (Theorem~\ref{WS=CSSFthm}). We also will see (Corollary~\ref{CSisoWScor}), in analogy with Theorem~\ref{SPS->standthm}, that every Cartesian system is isomorphic to a word system. But in the first place, we discuss the crucial Theorem~\ref{sec:cart-syst-word:main-theorem}: The dimension problem for finite-dimensional subproduct systems is equivalent to the cardinality problem for word systems over finite alphabets.

\begin{definition}\label{CSdefi}
A \hl{Cartesian system} (over $\bS$) is a family $X^\btimes=\bfam{X_t}_{t\in\bS}$ of sets $X_t$ with $X_0=\gklammer{\Lambda}$, a one point set, and with injections
\beqn{
i_{s,t}
\colon
X_{s+t}
~\longrightarrow~
X_s\times X_t
}\eeqn
such that the following analogues of Diagrams \eqref{D1} and \eqref{D2} commute.
    \begin{equation}
      \begin{tikzcd}[column sep=tiny]
        &X_{r+s+t}\dlar[swap]{i_{r+s,t}}\drar{i_{r,s+t}}&\\
        X_{r+s}\times X_t\drar[swap]{i_{r,s}\times\operatorname{id}_{X_t}}&&X_{r}\times X_{s+t}\dlar{\operatorname{id}_{X_r}\times i_{s,t}}\\
        &X_{r}\times X_s \times X_t
      \end{tikzcd}
    \end{equation}
    \begin{equation}
      \begin{tikzcd}     
{}&X_{t}\dlar[swap]{i_{0,t}}\drar{i_{t,0}}\dar{\operatorname{id}}&
\\
        X_{0}\times X_t\rar{\cong}&X_t&X_{t}\times X_{0}\lar[swap]{\cong}
      \end{tikzcd}
    \end{equation}
A family $Y^\btimes=\bfam{Y_t}_{t\in\bS}$ of subsets $Y_t\subset X_t$ is a \hl{Cartesian subsystem} of  $X^\btimes$ if $i_{s,t}(Y_{s+t})\subset Y_s\times Y_t$, where, as customary, we identify $Y_s\times Y_t\subset X_s\times X_t$.
\end{definition}

In the following we will consider Cartesian systems over $\N_0$ only. 

\it{Word systems} are for Cartesian systems what subproduct systems in
standard form are for subproduct systems. Let us fix a set $A$, the \hl{alphabet}. Put $A^0:=\CB{\Lambda}$ where $\Lambda:=()$ is the \hl{empty tuple}. Denote by $A^*:=\bigcup_{n\in\N_0} A^n$ the set of all finite \hl{words} with \hl{letters} $a$ in $A$. Denote the \hl{length} of a word $w\in A^n$ by $\betrag w:=n$. Defining a product by \it{concatenation}
\begin{equation*}
  (a_1,\ldots, a_n)(b_1,\ldots, b_m):=(a_1,\ldots,a_n,b_1,\ldots,b_m),
\end{equation*}
we turn $A^*$ into a monoid with $\Lambda$ as neutral element.

\begin{remark}
We apologize to all people working in \it{word theory} for not writing a word as $w=a_1\ldots a_n$. Our choice underlines the analogy with subproduct systems. And many of the following formulations run more smoothly when there is a product of words, but no product of letters.
\end{remark}

\begin{proposition}\label{prop:word_factorization}{~}
  \begin{enumerate}
\item\label{P1}
Every word $w$ in $A^{\ell_1+\ldots+\ell_k}$ factors uniquely as $w=w_1\ldots w_k$ with $w_i\in A^{\ell_i}$.
\item\label{P2}
Suppose $X_{i}\subset A^{\ell_i}$. Then $w\in
X_{1}\ldots X_{k}\subset A^{\ell_1+\ldots+\ell_k}$ if and only if $w_{i}\in X_{i}$ for all $i=1,\ldots,k$.
\end{enumerate}
\end{proposition}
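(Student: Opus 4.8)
The plan is to prove \ref{P1} first and then deduce \ref{P2} as an immediate consequence. The whole statement really amounts to the assertion that concatenation sets up a bijection between $A^{\ell_1}\times\ldots\times A^{\ell_k}$ and $A^{\ell_1+\ldots+\ell_k}$, and once this is phrased correctly there is almost nothing left to do.

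For \ref{P1} I would write $w=(a_1,\ldots,a_n)$ with $n=\ell_1+\ldots+\ell_k$ and introduce the partial sums $s_0:=0$ and $s_j:=\ell_1+\ldots+\ell_j$. For existence, set $w_j:=(a_{s_{j-1}+1},\ldots,a_{s_j})$; then $\betrag{w_j}=\ell_j$, so $w_j\in A^{\ell_j}$, and unwinding the definition of concatenation gives $w_1\ldots w_k=(a_1,\ldots,a_n)=w$. For uniqueness, suppose $w=w_1\ldots w_k$ with $w_j\in A^{\ell_j}$. By the definition of concatenation the entries of $w_j$ are exactly the entries of $w$ in positions $s_{j-1}+1,\ldots,s_j$; since these positions and entries are fixed by $w$, each $w_j$ is forced. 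Equivalently, the cut-up map $w\mapsto(w_1,\ldots,w_k)$ just defined is a two-sided inverse of the concatenation map $\mu_k\colon(x_1,\ldots,x_k)\mapsto x_1\ldots x_k$, so $\mu_k$ is a bijection and the factorization is unique.

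For \ref{P2} I would observe that $w\in X_1\ldots X_k$ means precisely that $w=x_1\ldots x_k$ for some $x_j\in X_j\subset A^{\ell_j}$. By the uniqueness in \ref{P1}, any such factors $x_j$ must coincide with the factors $w_j$ of $w$, whence $w_j=x_j\in X_j$ for every $j$. Conversely, if $w_j\in X_j$ for all $j$, then the factorization $w=w_1\ldots w_k$ already exhibits $w$ as an element of $X_1\ldots X_k$. This gives the stated equivalence.

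There is no genuine obstacle here: the result is just the bookkeeping underlying the monoid structure on $A^*$. The only point deserving a little care is a clean formulation of uniqueness, and I would favor the argument via the inverse bijection $\mu_k$ over an induction on $k$, since it avoids having to reindex the tail $w_2\ldots w_k$ at each step.
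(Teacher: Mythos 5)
Your proof is correct and follows essentially the same route as the paper, which likewise rests on the observation that concatenation identifies $A^{\ell_1}\times\ldots\times A^{\ell_k}$ bijectively with $A^{\ell_1+\ldots+\ell_k}$ and that tuples in a Cartesian product have unique components; you merely spell out the inverse (cut-up) map explicitly where the paper leaves it implicit. No gaps.
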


\begin{proof}
This proposition is a simple consequence of the fact that $A^{\ell_1}\times\ldots\times A^{\ell_k}$ may be identified with $A^n$ by sending $(w_1,\ldots,w_k)$ to $w_1\ldots w_k$, and of the fact that an element $s$ in a product $S_1\times \ldots\times S_k$ is a unique tuple
$(s_1,\ldots, s_k)$.
\end{proof}

We say a word $y$ is a \hl{subword} of $w$ if there are words $x,z\in A^*$ with $w=xyz$. One may check that the relation defined by $y$ being a subword of $x$, is a partial order.

\begin{definition}\label{def:word_system}
Let $A$ be an alphabet. A family $X^{\btimes}=\bfam{X_n}_{n\in\N_0}$ of subsets $X_n\subset A^n$ with $X_0=A^0$ is called a \hl{word system} over $A$ if it is \hl{closed under building subwords}. Writing $w\in X^\btimes$ if $w\in X_n$ for some $n\in\N_0$, this means that
  \begin{equation*}
y\in X^{\btimes}
\text{~~~whenever~~~}
xyz\in X^{\btimes}.
  \end{equation*}
 for some $x,z\in A^*$.
\end{definition}

\begin{proposition}\label{recurprop}
$X^{\btimes}$ is a word system if $X_{n+1}\subset AX_n\cap X_nA$ for all $n\in\N_0$.
\end{proposition}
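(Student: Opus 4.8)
The plan is to reduce the global closure-under-subwords condition of Definition~\ref{def:word_system} to the two one-letter truncations packaged in the hypothesis $X_{n+1}\subset AX_n\cap X_nA$. First I would unwind each inclusion using the unique factorization of Proposition~\ref{prop:word_factorization}: the inclusion $X_{n+1}\subset X_nA$ says that deleting the last letter of any $w\in X_{n+1}$ yields an element of $X_n$, that is, the length-$n$ prefix of $w$ lies in $X_n$; dually, $X_{n+1}\subset AX_n$ says the length-$n$ suffix of $w$ lies in $X_n$. It is here that Proposition~\ref{prop:word_factorization} is used, to identify the extracted $X_n$-factor with the genuine prefix (respectively suffix).

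Next I would iterate each of these one-sided statements to obtain a \emph{prefix-closure} and a \emph{suffix-closure} result. Concretely, I claim that for all $m,j\ge0$, if $w\in X_{m+j}$ then the length-$m$ prefix of $w$ lies in $X_m$, proved by induction on $j$: the case $j=0$ is trivial, and for the step one applies $X_{(m+j)+1}\subset X_{m+j}A$ to pass from $w\in X_{m+j+1}$ to its length-$(m+j)$ prefix $w'\in X_{m+j}$, observes that the length-$m$ prefix of $w$ coincides with that of $w'$ (since $m\le m+j$), and invokes the inductive hypothesis. The symmetric induction using $X_{n+1}\subset AX_n$ yields that every suffix of an element of $X_n$ lies in the corresponding $X_k$.

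With both closures in hand the conclusion is immediate. Given $xyz\in X^\btimes$, say $xyz\in X_N$ with $N=\abs x+\abs y+\abs z$, the word $xy$ is the length-$(\abs x+\abs y)$ prefix of $xyz$, hence $xy\in X_{\abs x+\abs y}$ by prefix-closure; and $y$ is the length-$\abs y$ suffix of $xy$, hence $y\in X_{\abs y}\subset X^\btimes$ by suffix-closure. This is exactly the requirement of Definition~\ref{def:word_system} (the empty-subword case $y=\Lambda$ being covered by $X_0=A^0$), so $X^\btimes$ is a word system.

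I do not expect a genuine obstacle here; the only points needing care are bookkeeping ones. The first is the identification, via Proposition~\ref{prop:word_factorization}, of the factor extracted from each inclusion with the actual prefix or suffix. The second is the decision to organize the truncation as an induction on the number of deleted letters rather than attempting to remove a whole block at once, together with the idea of writing a general subword as a suffix of a prefix; this is precisely what lets the two one-sided closures combine without having to handle arbitrary $x$ and $z$ simultaneously.
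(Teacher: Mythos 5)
Your proposal is correct and follows essentially the same route as the paper: repeated application of the hypothesis combined with the unique factorization of Proposition~\ref{prop:word_factorization}. The paper iterates both sides at once to get $X_{m+n+k}\subset A^mX_nA^k$ directly, whereas you iterate prefix- and suffix-deletion separately and then write a subword as a suffix of a prefix; this is only a bookkeeping difference.
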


\begin{proof}
By repeated application of the inclusion, we obtain $X_{m+n+k}\subset A^mX_nA^k$. So, by Proposition~\ref{prop:word_factorization}\eqref{P2}, if $xyz\in X_{m+n+k}\subset A^mX_nA^k$ with $x\in A^m, y\in A^n, z\in A^k$, then $y\in X_n$.
\end{proof}

Obviously, also the converse is true.

\begin{example}\label{FWSex}
Choose an alphabet $A$. By Proposition~\ref{prop:word_factorization}\eqref{P1}, the restriction of the product to $A^n\times A^m$ is an invertible map onto $A^{n+m}$. Define $i_{n,m}$ to be the inverse of this map. Then $A^\times:=\bfam{A^n}_{n\in\N_0}$ with the maps $i_{n,m}$ is a Cartesian system, the \hl{full word system over $A$}.
\end{example}

Full word systems play the role of product systems in standard form. We now show that word systems play the role of subproduct systems in standard form. Recall that the structure maps $i_{n,m}$ of $A^\times$ are the inverses of the restricted product maps. In particular, they are invertible so that $X_n\subset A_n$ form a Cartesian subsystem of $A^\times$ if and only if $X_{n+m}\subset X_nX_m$.

\begin{theorem}\label{WS=CSSFthm}
Let $A$ be an alphabet. For a family $X^{\btimes}=\bfam{X_n}_{n\in\N_0}$ of subsets $X_n\subset A^n$ the following conditions are equivalent.
  \begin{enumerate}
  \item\label{it_ws}
$X^{\btimes}$ is a word system over $A$.

  \item\label{it_cs}
$X^{\btimes}$ is a Cartesian subsystem of the full word system $A^\times$ over $A$.
  \end{enumerate}
\end{theorem}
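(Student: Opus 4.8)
The plan is to collapse both conditions into a single multiplicative inclusion and then verify the two implications by transcription. The key preliminary observation, already recorded in the remark immediately preceding the theorem, is that a family $\bfam{X_n}_{n\in\N_0}$ with $X_n\subset A^n$ (and shared marginal fiber $X_0=A^0=\CB{\Lambda}$) forms a Cartesian subsystem of $A^\times$ exactly when $i_{n,m}(X_{n+m})\subset X_n\times X_m$ for all $n,m$; since the $i_{n,m}$ are the inverses of the restricted concatenation maps, Proposition~\ref{prop:word_factorization}\eqref{P2} shows this is the same as demanding $X_{n+m}\subset X_nX_m$ for all $n,m\in\N_0$. So I would first reduce the whole statement to showing that closedness under building subwords (condition \eqref{it_ws}) is equivalent to this factorization closedness.

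For the implication \eqref{it_cs}$\Rightarrow$\eqref{it_ws} I would simply specialize the factorization inclusion. Taking $m=1$ gives $X_{n+1}\subset X_nX_1\subset X_nA$ because $X_1\subset A^1=A$, and taking $n=1$ gives $X_{n+1}\subset AX_n$; hence $X_{n+1}\subset AX_n\cap X_nA$, and Proposition~\ref{recurprop} already delivers that $X^\btimes$ is a word system. (Alternatively, one iterates the inclusion to $X_{k+\ell+m}\subset X_kX_\ell X_m$ and reads off $y\in X_\ell$ from any $xyz\in X^\btimes$ directly via Proposition~\ref{prop:word_factorization}\eqref{P2}.)

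For the converse \eqref{it_ws}$\Rightarrow$\eqref{it_cs}, I would fix $w\in X_{n+m}$ and use Proposition~\ref{prop:word_factorization}\eqref{P1} to factor it uniquely as $w=w_1w_2$ with $w_1\in A^n$ and $w_2\in A^m$. Now $w_1$ is a prefix and $w_2$ a suffix of $w$, so each is a subword of $w$; since $X^\btimes$ is closed under building subwords and $w\in X^\btimes$, I obtain $w_1,w_2\in X^\btimes$. It then remains to place $w_1$ and $w_2$ in the correct fibers, and this is the one spot where I expect care to be needed: membership $w_1\in X^\btimes$ only asserts $w_1\in X_k$ for some $k$, but because words of different lengths are distinct the sets $A^k$ are pairwise disjoint, so $w_1\in A^n\cap X_k\subset A^n\cap A^k$ forces $k=n$ and hence $w_1\in X_n$; likewise $w_2\in X_m$. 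Therefore $w\in X_nX_m$, giving $X_{n+m}\subset X_nX_m$, i.e.\ condition \eqref{it_cs}.

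The main obstacle, such as it is, is thus not a deep argument but the bookkeeping with lengths: one must keep the union $X^\btimes=\bigcup_{n}X_n$ and the individual fibers $X_n$ carefully distinct and use the well-definedness of word length to recover the correct fiber index, together with the uniqueness of factorization from Proposition~\ref{prop:word_factorization}. Everything else is a direct translation between the subword-closure formulation of \eqref{it_ws} and the Cartesian-subsystem formulation of \eqref{it_cs}.
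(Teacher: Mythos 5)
Your proposal is correct and follows essentially the same route as the paper: both reduce condition \eqref{it_cs} to the inclusion $X_{n+m}\subset X_nX_m$ via the invertibility of the maps $i_{n,m}$, prove \eqref{it_ws}$\Rightarrow$\eqref{it_cs} by observing that the factors of $w=w_1w_2$ are subwords (your extra bookkeeping with lengths just makes explicit what the paper calls ``immediate''), and prove \eqref{it_cs}$\Rightarrow$\eqref{it_ws} by specializing to $X_{n+1}\subset X_1X_n\cap X_nX_1\subset AX_n\cap X_nA$ and invoking Proposition~\ref{recurprop}.
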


\begin{proof}
If $w\in X^\btimes$ and $w=xy$, then $x$ and $y$ are subwords. So, \ref{it_ws}$\Rightarrow$\ref{it_cs} is immediate.

 Conversely, suppose $X_{m+n}\subset X_mX_n$ for all $m,n\in\N_0$. This means, in particular, that $X_{n+1}\subset X_1X_n\cap X_nX_1\subset AX_n\cap X_nA$. By Proposition~\ref{recurprop}, $X^\btimes$ is a word system, that is, \ref{it_cs}$\Rightarrow$\ref{it_ws}.
\end{proof}

As mentioned in the introduction, $X^\btimes\subset A^*$ being a word system is equivalent to a number of other properties well-known in the combinatorics of words. We comment on these in Section~\ref{sec:excluded-words}. We conclude the present section by examining the relationship between Cartesian systems, word systems, and the subject of our main interest: subproduct systems.

\begin{example}\label{ws-cs}
  Let $X^{\btimes}=\bfam{X_n}_{n\in\N_0}$ be a Cartesian system. Denote by $H_n$ the canonical Hilbert space with orthonormal basis $X_n$. Then, clearly, the embeddings $i_{n,m}$ of $X_{m+n}$ into $X_m\times X_n$ extend as isometries $v_{m,n}\colon H_{m+n}\rightarrow
  H_m\otimes H_n$ and the $v_{n,m}$ define a subproduct system structure, the subproduct system \hl{associated} with $X^\btimes$. Moreover, if $X^\btimes$ is a word system over $A$, so that $X_n\subset A^n$ and $H_n\subset H_1^{\otimes n}$, this subproduct system is in standard form.
\end{example}

Obviously, $\dim H_n=\#X_n$. We see, for every Cartesian (word) system there is a subproduct system (in standard form) such that the dimension sequence of the latter coincides with the cardinality sequence of the former. Before we show the converse statement in Proposition~\ref{sec:cart-syst-word:main-proposition}, let us mention that not all subproduct systems are isomorphic to one that is associated with a word system.

\begin{observation}\label{sec:cart-syst-word:obs}
If at least one $X_n$ in a word system contains a word with at least two
  different letters, then the associated subproduct system in standard
  form is not commutative. But there are commutative subproduct
  systems. See, for instance, the \it{symmetric subproduct system}
  introduced by Shalit and Solel \cite{ShaSo09}, which is
  obtained by considering the symmetric tensor power $H^{\otimes_s n}$
  as subspace of $H^{\otimes n}$.
\end{observation}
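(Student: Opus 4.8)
The plan is to reduce the statement to the combinatorics of concatenation by first writing down an explicit formula for the product of the associated subproduct system. Recall from Example~\ref{ws-cs} that a word system $X^\btimes$ over $A$ gives a subproduct system in standard form with $H_n\subset H_1^{\otimes n}$, where $H_1$ carries the orthonormal basis $X_1\subset A$ and $H_n$ the orthonormal basis $X_n$. Since $X^\btimes$ is a Cartesian subsystem of $A^\times$, we have $X_{m+n}\subset X_mX_n$, so on the Hilbert space level $H_{m+n}\subset H_m\otimes H_n\subset H_1^{\otimes(m+n)}$ under the concatenation identification; consequently the coisometric product map $w_{m,n}=v_{m,n}^*$ is simply the orthogonal projection of $H_m\otimes H_n$ onto $H_{m+n}$. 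Writing $e_w$ for the basis vector indexed by a word $w$, and $ab$ for the concatenation of $a\in X_m$ and $b\in X_n$, this yields the clean formula $e_ae_b=e_{ab}$ when $ab\in X_{m+n}$ and $e_ae_b=0$ otherwise. Establishing this formula carefully, i.e.\ identifying $w_{m,n}$ with the projection, is the one place where I would be careful; after that the argument is elementary.

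Next I would produce a witness for non-commutativity. If some $X_n$ contains a word $(a_1,\ldots,a_n)$ with at least two different letters, then not all consecutive letters can agree, so $a_k\ne a_{k+1}$ for some $k$. By closedness under subwords, $(a_k),(a_{k+1})\in X_1$ and $(a_k,a_{k+1})\in X_2$. Setting $a=a_k$ and $b=a_{k+1}$, the formula gives $e_ae_b=e_{ab}\ne 0$, while $e_be_a=e_{ba}$ if $ba\in X_2$ and $e_be_a=0$ otherwise. Since $a\ne b$ forces $ab\ne ba$, the vectors $e_{ab}$ and $e_{ba}$ are distinct orthonormal basis vectors, and $0\ne e_{ab}$; in every case $e_be_a\ne e_ae_b$. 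Hence the product of $H_1$ with itself into $H_2$ is not commutative, which proves the first assertion.

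For the second assertion I would invoke the symmetric subproduct system of Shalit and Solel \cite{ShaSo09}: its fibres are the symmetric tensor powers $H^{\otimes_s n}\subset H^{\otimes n}$ and its product is the (normalized) symmetric multiplication, which is commutative by construction, so commutative subproduct systems certainly exist. To conclude that not every subproduct system is isomorphic to one associated with a word system, I would take $H=\C^2$, so that the symmetric system has $\dim H_1=2$ and $\dim H_2=3$ and is commutative. Were it isomorphic to a word-system subproduct system, that word system would have $\#X_1=\dim H_1=2$ and $\#X_2=\dim H_2=3$; but over a two-letter alphabet the only single-letter words of length two are two in number, so $\#X_2=3$ forces $X_2$ to contain a word with two different letters, whence non-commutativity by the first part --- a contradiction. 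Thus the symmetric subproduct system over $\C^2$ witnesses the claimed phenomenon, and no deeper obstacle arises beyond the bookkeeping already described.
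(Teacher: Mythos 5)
Your proposal is correct, and it is essentially the argument the paper leaves implicit: the paper states this as an Observation without proof, and the intended justification is exactly your computation that the product of basis vectors is concatenation-or-zero, so two distinct composable letters give $e_ae_b=e_{ab}\neq e_be_a$. Your closing paragraph (the symmetric system over $\C^2$ with $\dim H_2=3$ forcing a mixed word in any would-be word-system realization) goes slightly beyond the literal statement but correctly substantiates the surrounding claim that not every subproduct system is isomorphic to one associated with a word system.
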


We now prepare for Proposition~\ref{sec:cart-syst-word:main-proposition}.

\begin{definition}\label{def:lexicographical_order}
  Let $A$ be a partially ordered alphabet. Then the
  \hl{lexicographical order} $\leq_{lex}$ on $A^n$ is given by
  $(a_1,\ldots, a_n)\leq_{lex}(b_1,\ldots, b_n)$ if $a_k=b_k$ for all
  $k$ or $a_k<b_k$ where $k$ is the smallest index $i$ with $a_i\neq b_i$.
\end{definition}

It is easy to show that the lexicographical order is a total order on $A^n$ whenever $\leq$ is a total order on $A$. Without the obvious proof, we state:

\begin{lemma}
  Let $y,y'\in A^n$. Then
  \begin{equation}
    y\leq_{lex}y'
~~~\Longleftrightarrow~~~
   \exists x,z\in A^*\colon xyz\leq_{lex}xy'z
~~~\Longleftrightarrow~~~
    \forall x,z\in A^*\colon xyz\leq_{lex}xy'z.
  \end{equation}
\end{lemma}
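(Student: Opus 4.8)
The plan is to prove the three-part equivalence by establishing a cycle of implications. Observe first that the middle and right statements both involve the \emph{same} underlying claim about the behaviour of $\leq_{lex}$ under left- and right-concatenation; the real content is that wrapping $y,y'$ with a common prefix $x$ and suffix $z$ neither creates nor destroys the lexicographic relation. Since the right statement (all $x,z$) trivially implies the middle one (some $x,z$, taking $x=z=\Lambda$), and the leftmost statement $y\leq_{lex}y'$ is the instance of the right statement with $x=z=\Lambda$, it suffices to prove the two nontrivial arrows: that $y\leq_{lex}y'$ implies $xyz\leq_{lex}xy'z$ for \emph{every} $x,z$, and that $xyz\leq_{lex}xy'z$ for \emph{some} $x,z$ implies $y\leq_{lex}y'$.

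For the forward direction, I would unwind Definition~\ref{def:lexicographical_order} directly. Suppose $y\leq_{lex}y'$. Either $y=y'$, in which case $xyz=xy'z$ and the conclusion is immediate, or there is a smallest index $k$ with $y_k\neq y_k'$ and $y_k<y_k'$. Writing $x\in A^p$, the words $xyz$ and $xy'z$ agree on their first $p$ coordinates (both equal to $x$), and the crucial point is that, because $y$ and $y'$ have the same length $n$, the suffix $z$ is attached at the same position in both words. Hence the first index at which $xyz$ and $xy'z$ differ is exactly $p+k$, and at that position the letters are $y_k<y_k'$, so $xyz\leq_{lex}xy'z$. This step is essentially bookkeeping on indices, and the equal-length hypothesis $y,y'\in A^n$ is what makes it go through.

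For the backward direction, I would argue contrapositively using that $\leq_{lex}$ is a total order on $A^n$ (noted in the excerpt just before the lemma) and hence on $A^{|x|+n+|z|}$. Suppose $y\not\leq_{lex}y'$; by totality this forces $y'<_{lex}y$ with $y\neq y'$. Applying the already-proved forward direction to the pair $y',y$ gives $xy'z\leq_{lex}xyz$ for every $x,z$, and since $y\neq y'$ we have $xyz\neq xy'z$, so in fact $xy'z<_{lex}xyz$, i.e. $xyz\not\leq_{lex}xy'z$ for every $x,z$. This contradicts the assumption that $xyz\leq_{lex}xy'z$ for some $x,z$. Thus the middle statement implies $y\leq_{lex}y'$, closing the cycle
\[
\text{left}\;\Longrightarrow\;\text{right}\;\Longrightarrow\;\text{middle}\;\Longrightarrow\;\text{left}.
\]

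I expect no genuine obstacle here: the lemma is flagged in the text as having an ``obvious proof,'' and the only subtlety worth stating carefully is the alignment of the differing index under concatenation, which relies squarely on $y$ and $y'$ being of equal length. The cleanest presentation is to isolate the forward implication (valid for all $x,z$) as the single computational lemma, and then derive both remaining arrows from it together with the totality of $\leq_{lex}$, rather than chasing three separate index arguments.
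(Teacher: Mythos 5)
The paper states this lemma explicitly ``without the obvious proof,'' so there is no proof in the text to compare against; your argument is a correct filling-in of the omission, and the observation you isolate --- that because $y$ and $y'$ have the \emph{same} length $n$, the first index at which $xyz$ and $xy'z$ can differ is forced to lie inside the $y$-block at position $|x|+k$, where the competing letters are exactly $y_k$ and $y'_k$ --- is precisely the point that makes the lemma true. One caveat is worth flagging: your backward implication is run contrapositively and invokes totality of $\leq_{lex}$ on $A^n$ (``$y\not\leq_{lex}y'$ forces $y'<_{lex}y$''), but Definition~\ref{def:lexicographical_order} only assumes $A$ is \emph{partially} ordered, and the text preceding the lemma guarantees totality of $\leq_{lex}$ only when the order on $A$ is total. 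In the one place the lemma is actually used --- the proof of Proposition~\ref{sec:cart-syst-word:main-proposition}, where $A=\{1,\dots,d\}$ carries its usual total order --- your argument is perfectly adequate. Still, the lemma holds for an arbitrary partial order on $A$, and the direct argument costs nothing extra: if $xyz\leq_{lex}xy'z$, then either the two words coincide, whence $y=y'$ by unique factorization (Proposition~\ref{prop:word_factorization}), or the smallest differing index is some $|x|+k$ with $1\le k\le n$, and unwinding the definition at that index yields $y_j=y'_j$ for $j<k$ and $y_k<y'_k$, i.e.\ $y\leq_{lex}y'$, with no appeal to totality or antisymmetry. I would recommend that small rearrangement; otherwise your decomposition into one computational forward implication plus the trivial specialization $x=z=\Lambda$ is exactly the right way to organize the cycle of implications.
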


\begin{proposition}[Cf.\ {\cite[Lemma 1.1]{Ani82}}]\label{sec:cart-syst-word:main-proposition}
  Let $H^{\botimes}=(H_n)_{n\in\N_0}$ be a finite dimensional subproduct system. Then there exists a word system $X^{\btimes}=(X_n)_{n\in\N_0}$ with
  \begin{equation*}
    \# X_n=\dim H_n \qquad\forall n\in\N_0.
  \end{equation*}
\end{proposition}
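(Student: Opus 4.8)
The plan is to realize the required word system by a leading-term (Gaussian-elimination) construction applied to the subproduct system in standard form. First I would invoke Theorem~\ref{SPS->standthm} to assume, without loss of generality, that $H^\botimes$ is in standard form; since isomorphisms of subproduct systems are families of unitaries and hence preserve dimensions, this costs nothing. Thus I may take $H_n\subseteq H_1^{\otimes n}$ with the coproduct equal to inclusion, so that the subproduct property reads $H_{m+n}\subseteq H_m\otimes H_n$ inside $H_1^{\otimes(m+n)}=H_1^{\otimes m}\otimes H_1^{\otimes n}$. Fixing an orthonormal basis of the finite-dimensional space $H_1$, I identify it with a finite alphabet $A$ (so $\#A=\dim H_1$) and identify the induced orthonormal basis of $H_1^{\otimes n}$ with the words $A^n$, compatibly with concatenation as in Proposition~\ref{prop:word_factorization}. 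Choosing any total order on $A$ gives the lexicographic order $\leq_{lex}$ on each $A^n$.

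Next I would define, for a nonzero vector $h=\sum_{w\in A^n}c_w\,w$, its leading word $\mathrm{lead}(h)$ to be the $\leq_{lex}$-largest $w$ with $c_w\neq0$, and set $X_n:=\{\mathrm{lead}(h):h\in H_n,\ h\neq0\}$, with $X_0=A^0$. The cardinality identity $\#X_n=\dim H_n$ is then the standard fact that the number of distinct leading words of a subspace equals its dimension: a reduced echelon basis of $H_n$ has exactly $\dim H_n$ pivot (leading) words, and the leading word of every nonzero $h\in H_n$ is among these pivots.

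The substance of the proof is showing that $X^\btimes=(X_n)$ is a word system, for which by Theorem~\ref{WS=CSSFthm} (equivalently Proposition~\ref{recurprop}) it suffices to prove the factorization $X_{m+n}\subseteq X_mX_n$. The key observation is that $\leq_{lex}$ on $A^{m+n}$ is dominated by the length-$m$ prefix: for $w=uv$ and $w'=u'v'$ with $|u|=|u'|=m$, the prefixes decide the comparison unless $u=u'$. Given $w=\mathrm{lead}(h)$ for some $0\neq h\in H_{m+n}$, write $w=u^*v^*$ and expand $h=\sum_{u,v}c_{u,v}\,u\otimes v$. Applying the coordinate functional of the suffix $v^*$ to the second tensor leg produces $a:=(\id\otimes\langle v^*,\cdot\rangle)(h)=\sum_u c_{u,v^*}\,u\in H_m$, where membership in $H_m$ uses $H_{m+n}\subseteq H_m\otimes H_n$; since $u^*v^*$ is maximal in $h$, no prefix exceeding $u^*$ can pair nontrivially with $v^*$, so $\mathrm{lead}(a)=u^*$ and $u^*\in X_m$. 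Dually, applying the coordinate functional of the prefix $u^*$ to the first leg gives $b:=(\langle u^*,\cdot\rangle\otimes\id)(h)=\sum_v c_{u^*,v}\,v\in H_n$ with $\mathrm{lead}(b)=v^*$, so $v^*\in X_n$. Hence $w=u^*v^*\in X_mX_n$.

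The main obstacle I anticipate is precisely this factorization of the leading word, and in particular the prefix direction $u^*\in X_m$: extracting a vector of $H_m$ with the correct leading word requires both the standard-form inclusion $H_{m+n}\subseteq H_m\otimes H_n$ (so that the partial evaluation lands in $H_m$) and the maximality of $u^*v^*$ in $h$ (so that the extracted vector acquires no larger leading prefix). The suffix direction and the cardinality count are routine linear algebra by comparison. Once $X_{m+n}\subseteq X_mX_n$ is established for all $m,n$, Theorem~\ref{WS=CSSFthm} identifies $X^\btimes$ as a word system over $A$ with $\#X_n=\dim H_n$, completing the proof.
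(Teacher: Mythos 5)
Your proof is correct, and while it shares the paper's central idea---a lexicographic Gaussian elimination combined with the compatibility of $\leq_{lex}$ with concatenation---it is executed along a genuinely different route. The paper does not pass to standard form: it uses the surjectivity of the coisometric product to write $H_n=\Span\gklammer{e_w\mid w\in A^n}$ with $e_w:=e_{a_1}\ldots e_{a_n}$, takes $X_n$ to be the greedily selected pivot words of this \emph{spanning family} (those $w$ with $e_w\notin\Span\gklammer{e_v\mid v<_{lex}w}$), and verifies closedness under \emph{arbitrary} subwords in one stroke from the identity $e_{xyz}=e_xe_ye_z$: a dependence $e_y=\sum_{y'<y}\alpha_{y'}e_{y'}$ propagates to $e_{xyz}=\sum_{y'<y}\alpha_{y'}e_{xy'z}$ with $xy'z<_{lex}xyz$. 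You instead reduce to standard form via Theorem~\ref{SPS->standthm}, take $X_n$ to be the set of leading words of the \emph{subspace} $H_n\subseteq H_1^{\otimes n}$ (the dual picture: coordinates of vectors rather than images of basis words---in general a different set of the same cardinality), and prove the prefix/suffix factorization $X_{m+n}\subseteq X_mX_n$ by partial contractions against $u^*$ and $v^*$, which Theorem~\ref{WS=CSSFthm} then upgrades to the full subword property. Your argument uses only the isometric coproduct (the inclusion $H_{m+n}\subseteq H_m\otimes H_n$) and never the surjectivity of the product, at the price of needing Theorem~\ref{SPS->standthm} and Theorem~\ref{WS=CSSFthm} as inputs; the paper's argument needs neither of these but relies on surjectivity and must treat general subwords directly. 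Both proofs turn on the same delicate point, which you correctly isolate: that under the splitting of a word into prefix and suffix, the lexicographic comparison is controlled separately in each factor.
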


\begin{proof}
  Let $(e_1,\ldots,e_d)$ be a basis of $H_1$. Set
  $A=\gklammer{1,\ldots,d}$. 
  For a word $w=(a_1,\ldots ,a_n)$ in $A^n$ we
  define the element $e_w\in H_n, e_w:=e_{a_1}\ldots e_{a_n}$. The
  multiplication of $H^\botimes$ is coisometric, hence,
  surjective. Therefore, $H_n=\Span\gklammer{e_w\mid w\in
    A^n}$.  Set
  \begin{equation*}
    X_n:=\gklammer{w\in A^n\mid e_w\notin\Span\gklammer{e_v\mid v<_{lex}w}}
  \end{equation*}
  
  Since $\gklammer{e_w\mid w\in X_n}$ is linearly independent and still
  spans $H_n$, it is a basis of $H_n$. We, thus, have $\# X_n=\dim
  H_n$ for all $n\in\N_0$.

We claim $X^{\btimes}=(X_n)_{n\in\N_0}$ is a word system over $A$. For a word
  $w\in A^n$ choose a subword $y\in A^k$ ($k\le n)$, so that there are $x,z\in
  A^*$ with $w=xyz$. We are done if we
   show $y\notin X_k\Rightarrow xyz\notin X_n$. Suppose $y\notin X_k$, that is,
  \begin{equation*}
    e_y=\sum_{y'<y}\alpha_{y'} e_{y'}.
  \end{equation*}
Then,
  \begin{equation*}
    e_w=e_xe_ye_z =\sum_{y'<y}\alpha_{y'} e_xe_{y'}e_z
    =\sum_{y'<y}\alpha_{y'} e_{xy'z}.
  \end{equation*}
  Since, by the lemma, $y'<_{lex}y$ implies $xy'z<_{lex}xyz$, we obtain $e_w\notin X_n$. 
\end{proof}

\begin{remark}
It might appear appealing, again to use Proposition~\ref{recurprop}. In this case, however, it would rather make the proof more complicated.
\end{remark}

\begin{example}
  For the symmetric subproduct system $H^{\otimes_s}$ we obtain 
  \begin{equation*}
    X_n=
    \bCB{\,(i_1,\ldots, i_n)\mid i_1\leq i_2\leq\ldots\leq i_n}.
  \end{equation*}
  As seen in Observation~\ref{sec:cart-syst-word:obs}, for $\dim H\ge2$, the subproduct system associated with $X^{\btimes}$ in the sense of
  Example~\ref{ws-cs} is not isomorphic to the
  original one.
\end{example}

Our main theorem is now a simple corollary of Example~\ref{ws-cs} and Proposition~\ref{sec:cart-syst-word:main-proposition}.

\begin{theorem}\label{sec:cart-syst-word:main-theorem}
For a seqence $\bfam{d_n}_{n\in\N_0}$ of numbers $d_n\in\N_0$ the following conditions are equivalent:
\begin{enumerate}
\item
$\bfam{d_n}_{n\in\N_0}$ is the dimension sequence of a subproduct system.

\item
$\bfam{d_n}_{n\in\N_0}$ is the cardinality sequence of a word system.
\end{enumerate}
\end{theorem}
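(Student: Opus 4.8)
The plan is to prove the two implications separately; each is a direct invocation of material already established, so no genuinely new argument is needed and the statement functions essentially as a bookkeeping corollary of Example~\ref{ws-cs} and Proposition~\ref{sec:cart-syst-word:main-proposition}. I would organize the proof as two short paragraphs, one per direction, preceded by the remark that the finiteness of the $d_n$ is what allows both constructions to be applied.

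For the implication from the word-system side to the subproduct-system side, I would start from a word system $X^\btimes=\bfam{X_n}_{n\in\N_0}$ realizing $\#X_n=d_n$ and simply pass to its associated subproduct system as constructed in Example~\ref{ws-cs}: take $H_n$ to be the Hilbert space with orthonormal basis $X_n$, with the isometries $v_{n,m}$ extending the injections $i_{n,m}$. Since an orthonormal basis of $H_n$ is indexed by $X_n$, one has $\dim H_n=\#X_n=d_n$ for every $n$, so $\bfam{d_n}_{n\in\N_0}$ is indeed the dimension sequence of a subproduct system.

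For the reverse implication, I would take a subproduct system $H^\botimes=\bfam{H_n}_{n\in\N_0}$ with $\dim H_n=d_n$. The only point to record is that, because each $d_n\in\N_0$ is finite, this subproduct system is finite-dimensional, which is exactly the hypothesis of Proposition~\ref{sec:cart-syst-word:main-proposition}. That proposition then produces a word system $X^\btimes=\bfam{X_n}_{n\in\N_0}$ with $\#X_n=\dim H_n=d_n$, as required; its alphabet $\gklammer{1,\ldots,d_1}$ has $d_1$ letters, so the word system is automatically over a finite alphabet.

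There is no real obstacle at this stage, since the substantive work has already been done: the passage from sets to Hilbert spaces in Example~\ref{ws-cs}, and the lexicographic basis selection that turns a finite-dimensional subproduct system into a subword-closed family in Proposition~\ref{sec:cart-syst-word:main-proposition}. The only thing worth checking explicitly is the compatibility of the finiteness conventions: that a dimension sequence valued in $\N_0$ corresponds to a finite-dimensional subproduct system, and that a cardinality sequence valued in $\N_0$ corresponds to a word system over a finite alphabet. Both are immediate, the latter because in any word system every letter of every word is itself a subword and hence lies in $X_1\subset A$, so the effective alphabet has cardinality $\#X_1=d_1$.
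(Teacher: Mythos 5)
Your proof is correct and follows exactly the route the paper takes: the theorem is stated there as an immediate corollary of Example~\ref{ws-cs} (word system $\rightarrow$ subproduct system with $\dim H_n=\#X_n$) and Proposition~\ref{sec:cart-syst-word:main-proposition} (finite-dimensional subproduct system $\rightarrow$ word system with $\#X_n=\dim H_n$). Your extra remarks on the finiteness conventions are harmless and, if anything, slightly more careful than the paper's one-line justification.
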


And finally:

\begin{corollary}\label{CSisoWScor}
Every Cartesian system is isomorphic to a word system.
\end{corollary}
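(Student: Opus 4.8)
The plan is to mimic the passage from a subproduct system to its standard form (Theorem~\ref{SPS->standthm}): I take the fibre $X_1$ itself as the alphabet and embed every $X_n$ into $X_1^n$ by iterating the structure injections. Set $A:=X_1$, so that $A^0=X_0=\{\Lambda\}$ and $A^1=X_1$. I define injections $j_n\colon X_n\to A^n$ recursively, letting $j_0$ and $j_1$ be the identities and putting $j_n:=(\id_{A}\times j_{n-1})\circ i_{1,n-1}$ for $n\ge2$, where I identify $A\times A^{n-1}$ with $A^n$ by concatenation as in Example~\ref{FWSex}. Since each $i_{s,t}$ is injective and a composite of injections is injective, every $j_n$ is injective; thus $j_n$ is a bijection onto its image $Y_n:=j_n(X_n)\subset A^n$.

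The heart of the argument is the compatibility relation: for all $n,m\in\N_0$ and $w\in X_{n+m}$, writing $i_{n,m}(w)=(u,v)$, one has $j_{n+m}(w)=j_n(u)\,j_m(v)$ (concatenation in $A^{n+m}$). I would prove this by induction on $n$. The cases $n=0$ and $n=1$ follow directly from the marginal condition and from the recursive definition of $j$. For the inductive step I would invoke coassociativity (the first diagram in Definition~\ref{CSdefi}) with $(r,s,t)=(1,n-1,m)$, which identifies the two ways of splitting off first the leading letter and then the block of length $n-1$; feeding $w$ through both paths shows that the leading letter together with the length-$(n-1)$ and length-$m$ tails computed via $i_{1,n+m-1}$ agree with those obtained from $i_{n,m}$ followed by $i_{1,n-1}$. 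The induction hypothesis applied to the length-$(n-1)$ block then yields the claim.

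Once the compatibility relation is in hand, the corollary follows quickly. Taking $w\in X_{n+m}$ with $(u,v)=i_{n,m}(w)$, the relation gives $j_{n+m}(w)=j_n(u)j_m(v)\in Y_nY_m$, so $Y_{n+m}\subset Y_nY_m$. By the criterion recorded just before Theorem~\ref{WS=CSSFthm}, the family $Y^\btimes=(Y_n)_{n\in\N_0}$ is a Cartesian subsystem of the full word system $A^\times$, hence by Theorem~\ref{WS=CSSFthm} a word system over $A$. Finally, the bijections $(j_n)_{n\in\N_0}$ constitute an isomorphism $X^\btimes\to Y^\btimes$: they intertwine the structure maps precisely because the compatibility relation says $j_n(u)j_m(v)=j_{n+m}(w)$ whenever $i_{n,m}(w)=(u,v)$, while on $Y^\btimes$ the structure map of $A^\times$ is the inverse of concatenation.

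I expect the only real obstacle to be the coassociativity bookkeeping in the inductive step of the compatibility relation; everything else is routine. A slicker, coordinate-free formulation would phrase $j_n$ as \emph{the} canonical injection $X_n\to X_1^n$ obtained from the fact that all iterated injections coincide, which is itself a consequence of coassociativity, but the explicit left-to-right recursion makes the induction most transparent.
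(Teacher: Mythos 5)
Your proof is correct, and it takes a genuinely different route from the one the paper records. The paper's proof is indirect: it passes to the subproduct system $H^\botimes$ associated with $X^\btimes$ (Example~\ref{ws-cs}), runs the lexicographic basis-extraction argument of Proposition~\ref{sec:cart-syst-word:main-proposition} on $H^\botimes$ to produce a word system, and then asks the reader to check that this output is isomorphic to $X^\btimes$. The paper explicitly remarks afterwards that ``this also can be proved directly without reference to subproduct systems''---and that direct proof is precisely what you have supplied. Your recursion $j_n=(\id_A\times j_{n-1})\circ i_{1,n-1}$ with alphabet $A=X_1$, the compatibility relation $j_{n+m}(w)=j_n(u)\,j_m(v)$ proved by induction via coassociativity at $(r,s,t)=(1,n-1,m)$, and the appeal to the criterion $Y_{n+m}\subset Y_nY_m$ together with Theorem~\ref{WS=CSSFthm}, all check out; I verified the inductive step and it closes correctly (the marginal conditions handle $n\in\{0,1\}$ and $m=0$). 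What your approach buys is a self-contained, purely set-theoretic argument that exactly mirrors the passage to standard form in Theorem~\ref{SPS->standthm} and produces the canonical embedding $X_n\hookrightarrow X_1^n$ explicitly; what the paper's route buys is brevity and the reuse of machinery already established for the main theorem, at the cost of a nontrivial ``check that the output is isomorphic to $X^\btimes$'' left to the reader. The only point worth making explicit in a write-up is the intertwining identity $i^Y_{n,m}\circ j_{n+m}=(j_n\times j_m)\circ i_{n,m}$, which is exactly your compatibility relation read against the fact that the structure map of $A^\times$ is the inverse of concatenation; you do state this, so nothing is missing.
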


\begin{proof}
Let $X^\btimes$ be a Cartesian system and construct the subproduct system associated with $X^\btimes$, $H^\botimes$, as in Example~\ref{ws-cs}. Then apply the proof of Proposition~\ref{sec:cart-syst-word:main-proposition} to $H^\botimes$ and check that output is isomorphic to $X^\btimes$.
\end{proof}

Of course, this also can be proved directly without reference to subproduct systems.


\section{The cardinality sequence of a word system: Known results}
\label{sec:excluded-words}

Word systems are also known as \it{factorial languages} and subwords are known as \it{factors}; see, for instance, Crochemore, Mignosi, and Restivo \cite{CMR98}. Cardinality sequences also appear under names like (\it{combinatorial}) \it{complexity} (\it{sequence}, \it{function}).

Correspondingly, there is a long list of known results. Additionally, there are equivalent descriptions, still multiplying the number of applicable results. Unfortunately, the publications dealing with this structure (under different names or in equivalent definitions) frequently seem to not interact. (We hope it may be forgiven that we add a further name, word systems, that is inspired from the analogy with subproduct systems.) This feature does not make it particularly easy to get an idea about the real status of the theory. In this section we intend to give an overview over such relations. It should be clear that this cannot be exhaustive. But we hope we can at least provide a small guide pointing into interesting directions, and we cite sources where the interested reader can find more information.

\subsection*{Reduced sets of excluded words}
A word system can be described by indicating which words do not occur as subwords. The following results are well known (see for example \cite{CMR98}), but we prefer to give independent proofs, firstly, to illustrate how arguments work and, secondly, to be self-contained in the following section. They also promise to be relevant in analyzing the structure of associated graph \nbd{C^*}algebras.

\begin{observation}\label{ob1}
  Let $E\subset A^*$ be any set of words. Then the sets
  \begin{equation*}
    X_n(E):=\gklammer{w\in
      A^n\mid \text{$w$ has no subword from $E$}}
\end{equation*}
form a word system, denoted $X^\btimes(E)$.  Indeed, if $w$ does not contain a subword from $E$
and $y$ is a subword of $w$, then, by transitivity, also $y$ cannot contain a subword from $E$.
\end{observation}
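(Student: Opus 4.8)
The plan is to verify directly the three requirements of Definition~\ref{def:word_system} for the family $X^\btimes(E) = \bfam{X_n(E)}_{n\in\N_0}$. By construction each $X_n(E)$ is a subset of $A^n$, so the family has the right shape. For the base case, the only subword of the empty word $\Lambda$ is $\Lambda$ itself, whence $X_0(E) = A^0$ holds precisely when $\Lambda \notin E$; since excluding $\Lambda$ would make every $X_n(E)$ empty and thereby violate $X_0 = A^0$, I would record the harmless convention $\Lambda \notin E$ (which is automatic once one passes to a reduced exclusion set). The only substantive point is therefore closure under building subwords.

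For that I would argue via the transitivity of the subword relation, noted just before Definition~\ref{def:word_system} to be a partial order. Suppose $xyz \in X^\btimes(E)$, say $xyz \in X_n(E)$, and let $y \in A^k$ be the indicated subword. If $y$ had a subword from $E$, that is $y = p e q$ with $e \in E$ and $p,q \in A^*$, then $xyz = (xp)\,e\,(qz)$ would exhibit $e$ as a subword of $xyz$, contradicting $xyz \in X_n(E)$. Hence $y$ has no subword from $E$, i.e. $y \in X_k(E) \subset X^\btimes(E)$, which is exactly the required closure condition.

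I do not expect any real obstacle: the entire argument is the contrapositive of the slogan \emph{a subword of a subword is a subword}, and all three defining conditions fall out once that transitivity is in hand. The single point deserving a word of care is the base case $X_0(E)=A^0$, since it is the only place where the content of $E$ (specifically, whether $\Lambda$ lies in $E$) actually matters for the family to qualify as a word system.
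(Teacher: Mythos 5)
Your proof is correct and takes essentially the same route as the paper, which likewise reduces the claim to transitivity of the subword relation; your explicit factorization $xyz=(xp)\,e\,(qz)$ merely spells out what the paper compresses into a one-line remark. Your added caveat that $\Lambda\notin E$ is needed for $X_0(E)=A^0$ is a fair observation which the paper only addresses implicitly later, when it restricts attention to reduced exclusion sets $E\subset A^*\setminus\gklammer{\Lambda}$.
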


Reflexivity means that every word is a subword of itself. Therefore, $E$ and $X^\btimes(E)$ are disjoint. 

\begin{observation}\label{ob2}
  Every word system $X^{\btimes}$ can be obtained as
  $X^\btimes=X^\btimes(E)$.  Indeed take $E:=A^*\setminus
  X^{\btimes}$, the set of all words in $A^*$ that do not belong to
  $X^\btimes$. A word belongs to the word system $X^{\btimes}$ if and
  only if all its subwords belong to $X^\btimes$. Equivalently $x\in
  X^\btimes$ if and only if none of its subwords is in $A^*\setminus
  X^\btimes$, that is, $X^\btimes=X^\btimes(A^*\setminus
  X^{\btimes})$.
\end{observation}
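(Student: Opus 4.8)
The plan is to prove the asserted equality $X^\btimes = X^\btimes(E)$ for the explicit choice $E := A^*\setminus X^\btimes$, and to do so fiber by fiber, i.e. by verifying $X_n = X_n(E)$ for every $n\in\N_0$ via a double inclusion. As a first step I would record the meaning of membership in the right-hand side: a word $w\in A^n$ lies in $X_n(E)$ precisely when none of its subwords belongs to $E$, which by the choice of $E$ is the same as saying that every subword of $w$ lies in $X^\btimes$.

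For the inclusion $X_n\subseteq X_n(E)$ I would take $w\in X_n$ and exploit that $X^\btimes$ is a word system, hence closed under building subwords (Definition~\ref{def:word_system}): every subword $y$ of $w$ again lies in $X^\btimes$, so $y\notin E$. Consequently $w$ has no subword from $E$, and therefore $w\in X_n(E)$.

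For the reverse inclusion $X_n(E)\subseteq X_n$ I would take $w\in X_n(E)$ and invoke reflexivity of the subword relation, recorded just before the statement: since $w$ is a subword of itself, it is among the subwords of $w$ that are forbidden to lie in $E$. Hence $w\notin E = A^*\setminus X^\btimes$, which forces $w\in X^\btimes$; as $w\in A^n$, this gives $w\in X_n$.

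I do not expect any genuine obstacle here, as the whole argument is a short set-theoretic unwinding. The only point deserving care is that the reverse inclusion rests entirely on reflexivity of the subword order; without it one could not pass from ``no subword of $w$ lies in $E$'' to ``$w$ itself lies in $X^\btimes$.'' I would also note that no separate check that $X^\btimes(E)$ is a word system is required, since this is guaranteed in general by Observation~\ref{ob1}.
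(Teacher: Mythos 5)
Your argument is correct and is essentially the same as the paper's: the paper compresses your two inclusions into the single equivalence ``$w\in X^\btimes$ iff all subwords of $w$ lie in $X^\btimes$,'' whose two directions are exactly your appeal to closure under subwords and to reflexivity, respectively. You have merely unwound this fiberwise, so there is nothing to add.
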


$E=A^*\setminus X^{\btimes}$ is, clearly, the maximal choice. We now show that there is a unique minimal choice.

\begin{definition}\label{def:reduced}
  A subset $E\subset A^*\setminus\gklammer{\Lambda}$ is called \hl{reduced} if no word of $E$ is a proper subword of another word of $E$.
\end{definition}

Reduced sets are also known as  \it{antifactorial languages}.

Note that $E=A^*\setminus X^{\btimes}$ is reduced if and only if it is empty, that is, if $X^\btimes$ is the full word system over $A$. (Indeed, suppose that $E$ is reduced. If $A=\eset$, so that $A^*=\CB{\Lambda}$, then a reduced subset $E$ of $A$, by definition, is empty. And if $A$ is nonempty, then every word $x$ is a proper subword of another word $y$. If $y\in X^\btimes$, then $x\in X^\btimes$, because $X^\btimes$ is a word system. If $y\notin X^\btimes$, that is, if $y\in E$, then $x\notin E$, that is, $x\in X^\btimes$, because $E$ is reduced. So, every word $x$ is in $X^\btimes$, that is, $X^\btimes=A^*$. The other direction is obvious.)

\begin{proposition}\label{sec:excluded-words-1}
  Let $E$ be reduced and $X^{\btimes}(E)=X^{\btimes}(E')$. Then $E\subset E'$. 
\end{proposition}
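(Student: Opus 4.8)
The plan is to show $E\subset E'$ by fixing an arbitrary $e\in E$ and producing, through a short ``ping-pong'' between the two exclusion sets, an element of $E'$ that turns out to equal $e$. The two facts I would use repeatedly are: (i) the disjointness $E\cap X^\btimes(E)=\eset$ noted after Observation~\ref{ob2}, which holds because reflexivity makes every word a subword of itself from $E$; the identical reasoning gives $E'\cap X^\btimes(E')=\eset$, since disjointness of a set of excluded words from its word system holds for \emph{any} such set; and (ii) the hypothesis $X^\btimes(E)=X^\btimes(E')$.

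First I would fix $e\in E$. By (i), $e\notin X^\btimes(E)$, and hence by (ii) also $e\notin X^\btimes(E')$. By the very definition of $X^\btimes(E')$, a word fails to lie in it exactly when it has a subword from $E'$; so there is some $e'\in E'$ that is a subword of $e$. Now I would bounce back: since $e'\in E'$, disjointness (i) applied to $E'$ gives $e'\notin X^\btimes(E')=X^\btimes(E)$, so $e'$ in turn has a subword $e''\in E$.

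At this point I would invoke transitivity of the subword relation (it is a partial order, as recorded in the excerpt): $e''$ is a subword of $e'$, which is a subword of $e$, so $e''$ is a subword of $e$. Since $e''$ and $e$ both lie in $E$ and $E$ is reduced, $e''$ cannot be a \emph{proper} subword of $e$; being a subword, it must equal $e$. Thus $e=e''$ is a subword of $e'$, while $e'$ is a subword of $e$, and antisymmetry of the subword partial order forces $e=e'$. As $e'\in E'$, this yields $e\in E'$, which establishes the inclusion.

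The argument is a clean chain, so there is no serious computational obstacle; the one point to get right is the \emph{direction} of the two applications of disjointness, using it for $E'$ as well as for $E$ in order to bounce back into $E$, and then exploiting reducedness of $E$ precisely to collapse the resulting subword chain $e''\subset e'\subset e$ to an equality. Note that reducedness is used only for $E$, in agreement with the hypotheses, while $E'$ remains arbitrary.
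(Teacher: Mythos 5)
Your proof is correct. It relies on the same two facts as the paper's proof---the disjointness of any exclusion set from the word system it generates, and the hypothesis $X^{\btimes}(E)=X^{\btimes}(E')$---but it assembles them differently. The paper argues indirectly with a single ``bounce'': assuming $w\in E\setminus E'$, it produces a subword $y\in E'$ of $w$, observes that $y$ is \emph{proper} because $w\notin E'$, and then uses reducedness of $E$ to see that $y$ and all of its subwords lie outside $E$, whence $y\in X^{\btimes}(E)=X^{\btimes}(E')$, contradicting $y\in E'$. You argue directly with two bounces, from $e\in E$ to a subword $e'\in E'$ to a further subword $e''\in E$, and then collapse the chain: reducedness of $E$ forces $e''=e$, and antisymmetry of the subword order forces $e'=e$, so $e\in E'$. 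The cost of your route is the extra bounce and the appeal to antisymmetry (which the paper records only as an unproved remark that the subword relation is a partial order; alternatively one can simply compare lengths); the gain is that the argument is direct rather than by contradiction and shows a little more, namely that the $E'$-subword witnessing $e\notin X^{\btimes}(E')$ is $e$ itself. Both proofs are sound; the difference is one of organization rather than substance.
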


\begin{proof}
We conclude indirectly. Suppose $w\in E\setminus E'$. Since $w\in E$, $w$ does not belong to $X^\btimes(E)=X^\btimes(E')$. Therefore, $w$ contains a subword $y\in E'$. Since $w\notin E'$, $y$ is a proper subword of $w$. Since $E$ is reduced, $y$ and all subwords of $y$ are not in $E$. Therefore, $y\in X^\btimes(E)$. But, $y\in E'$, so $y\notin X^\btimes(E')=X^\btimes(E)$. Contradiction!
\end{proof}

This proposition shows that if there is a reduced set $R$ such that $X^\btimes(R)=X^\btimes$, then
$  R=\bigcap_{X^{\btimes}(E)=X^{\btimes}} E$.
In particular, $R$ is unique. The following theorem settles existence by giving an explicit formula for $R$. The unique reduced set $R$ generating $X^\btimes$ as $X^\btimes(R)$ is also called the \it{antidictionary} of $X^\btimes$.

\begin{theorem}\label{sec:excluded-words-2}
  For every word system $X^{\btimes}$ over $A$,
  \begin{align*}
    R&:=\bigcup_{n\geq1} R_n,
    &
    R_n&:=(X_{n-1} A\cap A X_{n-1})\setminus X_{n}
  \end{align*}
  is the unique reduced set of words such that $X^\btimes=X^\btimes(R)$.
\end{theorem}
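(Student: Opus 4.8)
The plan is to prove the two substantive assertions---that $R$ is reduced and that $X^\btimes(R)=X^\btimes$---and then read off uniqueness from Proposition~\ref{sec:excluded-words-1}. The conceptual key is to reinterpret $R$ as the set of \emph{minimal forbidden words}: those $w\notin X^\btimes$ all of whose proper subwords lie in $X^\btimes$. To make this precise I would first record the elementary combinatorial fact that, for $w\in A^n$, every proper subword of $w$ is already a subword of its length-$(n-1)$ prefix or of its length-$(n-1)$ suffix. Indeed a proper subword $(a_i,\ldots,a_j)$ has $(i,j)\neq(1,n)$, so either $j<n$ (then it sits inside the prefix $(a_1,\ldots,a_{n-1})$) or $i>1$ (then it sits inside the suffix $(a_2,\ldots,a_n)$). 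Since $X^\btimes$ is closed under subwords, this shows that the two-sided condition $w\in X_{n-1}A\cap AX_{n-1}$ (prefix and suffix both in $X^\btimes$) is equivalent to \emph{all} proper subwords of $w$ lying in $X^\btimes$. Hence $w\in R_n$ exactly when $w\notin X^\btimes$ but every proper subword of $w$ is in $X^\btimes$.

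From this characterization reducedness is immediate: if some $w\in R$ were a proper subword of another $w'\in R$, then $w'\in R$ forces all proper subwords of $w'$---in particular $w$---into $X^\btimes$, contradicting $w\in R$ (which gives $w\notin X^\btimes$). The inclusion $X^\btimes\subset X^\btimes(R)$ is equally direct: any subword of a word in $X^\btimes$ is again in $X^\btimes$, whereas no element of $R$ belongs to $X^\btimes$; so a word in $X^\btimes$ can have no subword from $R$, i.e.\ it lies in $X^\btimes(R)$.

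The reverse inclusion $X^\btimes(R)\subset X^\btimes$ is the one requiring an argument, and I would prove it by contraposition. Given $w\notin X^\btimes$, choose among all subwords of $w$ not lying in $X^\btimes$ one, say $y$, of minimal length; such a $y$ exists since $w$ itself qualifies, and $\betrag{y}\geq1$ because $\Lambda\in X_0\subset X^\btimes$. By transitivity the length-$(\betrag{y}-1)$ prefix and suffix of $y$ are strictly shorter subwords of $w$, hence by minimality of $y$ they both lie in $X^\btimes$; therefore $y\in X_{\betrag{y}-1}A\cap AX_{\betrag{y}-1}$, and combined with $y\notin X^\btimes$ this gives $y\in R_{\betrag{y}}\subset R$. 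Thus $w$ has a subword in $R$, so $w\notin X^\btimes(R)$, as required. Together with the previous paragraph this yields $X^\btimes(R)=X^\btimes$.

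Uniqueness I would not reprove from scratch: since $R$ is reduced and $X^\btimes(R)=X^\btimes$, any reduced $R'$ with $X^\btimes(R')=X^\btimes$ satisfies $R\subset R'$ by Proposition~\ref{sec:excluded-words-1} applied to $R$, and symmetrically $R'\subset R$, whence $R'=R$. The step I expect to demand the most care is the very first one---verifying that the prefix/suffix condition built into the definition of $R_n$ genuinely captures ``all proper subwords admissible.'' Once the prefix/suffix subword lemma is in hand, reducedness and both inclusions follow smoothly from the minimal-forbidden-word description, and the minimal-length selection in the hard inclusion is the only place a nontrivial choice is made.
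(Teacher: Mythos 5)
Your proof is correct and follows essentially the same route as the paper's ``Version 1'': the same characterization of $R_n$ as the minimal forbidden words via the length-$(n-1)$ prefix and suffix, the same disjointness arguments for reducedness and for $X^\btimes\subset X^\btimes(R)$, and uniqueness read off from Proposition~\ref{sec:excluded-words-1}. The only cosmetic difference is in the inclusion $X^\btimes(R)\subset X^\btimes$, where the paper argues by induction on word length while you pick a minimal-length subword outside $X^\btimes$ and argue by contraposition; the two arguments are interchangeable.
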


\begin{proof}
A word $w=(a_1,\ldots, a_n)$ is in $X_{n-1} A\cap A X_{n-1}$ if and only if the two subwords $w_{\wh{n}}=(a_1,\ldots, a_{n-1})$ and $w_{\wh{1}}=(a_2,\ldots, a_n)$ are in $X_{n-1}$. Now, each proper subword $y$ of $w$ is a subword of $w_{\wh{n}}$ or a subword of $w_{\wh{1}}$. Since $X^\btimes$ is a word system, $y\in X^\btimes$. In other words, $w=(a_1,\ldots, a_n)$ is in $X_{n-1} A\cap A X_{n-1}$ if and only if each of its proper subwords is in $X^\btimes$.

In order to illustrate some different techniques, we continue in two versions.

\lf
\ul{Version 1:} Since all proper subwords of $w\in R_n$ are in $X^\btimes$, these subwords are not in $R$. Therefore, $R$ is reduced.

  To show $X^{\btimes}\subset X^{\btimes}(R)$ take any $w\notin X^{\btimes}(R)$. Then $w$ has a subword $r\in R$. Since $R$ and $X^\btimes$ are disjoint, $r$ is not in $X^\btimes$. Hence, the word $w$ containing $r$ is not in the word system $X^\btimes$. 

  For the other inclusion, we show $X_n(R)\subset X_n$ by induction on
  $n$. Since a reduced set may not
  contain the empty word, $X_0=\gklammer{\Lambda}=X_0(R)$. Now let $n\ge1$ and suppose $X_{n-1}(R)\subset X_{n-1}$. Let
  $w=(a_1,\ldots, a_n)\in X_n(R)$. As $X^{\btimes}(R)$ is a
  word system, the two subwords $w_{\wh{n}}$ and $w_{\wh{1}}$ of $w$ belong to $X_{n-1}(R)$. By assumption, $X_{n-1}(R)$ is a subset of $X_{n-1}$. In other words, $w\in(X_{n-1}A)\cap (AX_{n-1})$.  Since $R$
  and $X^\btimes(R)$ are disjoint, $w$ is not an element of
  $R_n$. Since $R_n=((X_{n-1}A)\cap (AX_{n-1}))\setminus X_n$,
  this implies $w\in X_n$, so $X_n(R)\subset X_n$ for all $n$. In conclusion, $X^\btimes(R)\subset X^\btimes$.

\lf
\ul{Version 2:} For any $E\subset A^*$ we may obtain the unique reduced $E^{red}$ such that $X^\btimes(E^{red})=X^\btimes(E)$ by replacing $E_n:=E\cap A^n$ with
\beqn{
E^{red}_n:=\bCB{w\in E_n\mid\text{$w$ has no subword from $E_k$, $k=1,\ldots,n-1$}}.
}\eeqn
(We omit the proof.) So, for our word system $X^\btimes$, appealing to Observation~\ref{ob2}, put $E:=A^*\setminus X^\btimes$. We find
\baln{
E^{red}_n
&
=\bCB{w\in A^n\setminus X_n\mid\text{$w$ has no subword from $A^k\setminus X_k$, $k=1,\ldots,n-1$}}.
\\
&
=\bCB{w\in A^n\setminus X_n\mid\text{all proper subwords of $w$ are in $X^\btimes$}}.
\\
&
=\bCB{w\in A^n\setminus X_n\mid w\in X_{n-1}A\cap AX_{n-1}}.
}\ealn
So, $E^{red}_n=R_n$.
\end{proof}

The second proof also illustrates the feature of exclusion sets with only one word as \it{atoms}, and further exploitation of the problem's \it{inductive structure} will be demonstrated in Theorem~\ref{theo:local_to_global}. For simplicity assume $X_1=A$. To understand the reduced set $R$ of a given word system $X^\btimes$ over $X_1$, for $R_2$ simply take all words of length $2$ that do not occur in $X^\btimes$. Then to get $R_3$ from $X^\btimes(R_2)$ take all words of length $3$ that do not occur in $X^\btimes$. Then proceed with $X^\btimes(R_2\cup R_3)$ and words of length $4$ to get $R_4$, and so forth. In general, we have
\beqn{
X^\btimes(E\cup E')
~=~
X^\btimes(E)\cap X^\btimes(E').
}\eeqn
So, not only do we get $X^\btimes(R_2\cup\ldots\cup R_n)=X^\btimes(R_2)\cap\ldots\cap X^\btimes(R_n)$, but
\beqn{
X^\btimes
~=~
\bigcap_{r\in R}X^\btimes(\CB{r}).
}\eeqn
So, being the smallest building blocks (the maximal proper word subsystems) it is important to understand first the the cases $R=\CB{r}$ ($X^\btimes(\CB{r})$). Also the case where $R=R_2$ is important; in Section~\ref{sec:count-with-word} it will lead to word systems of graphs.

\subsection*{A sufficient condition for boundedness} Balogh and Bollob{\'a}s \cite[Theorems 6 and 8]{BaBo05} prove that if $\#X_k\leq k$ for some $k\in\mathbb{N}$, then $(\#X_n)_{n\in\mathbb{N}}$ is bounded; more precisely, if $\#X_k\leq k$, then
\begin{align}
  \label{eq:BaBo}
  \#X_\ell\leq \left\lceil\frac{\#X_k+1}{2}\right\rceil\left\lfloor\frac{\#X_k+1}{2}\right\rfloor 
\end{align}
for all $\ell\geq k + \#X_k$, where $\lceil n\rceil,\lfloor n\rfloor$ denote the smallest integer $\geq n$ and the largest integer $\leq n$, respectively. This result becomes very powerful when one deals with the monoids $\mathbb{Q}_+$ or $\mathbb{R}_+$ instead of $\mathbb{N}_+$, see Note~\ref{note:dimension-rational-time}.

\subsection*{Generating functions} Guibas and Odlyzko find
the generating function $ \sum_{n=0}^\infty \# X_n(R)z^{-n}$
 for a word system with a finite reduced set $R$ of excluded words as the solution of a system of linear equations only depending on the so-called \it{correlation} of the words in $R$ \cite[Theorem 1.1]{GuOd81}. As a special case, they give an explicit formula for the generating function in the case $R=\CB{r}$, which depends only on the \it{autocorrelation} of the only one excluded word $r$. In \cite[Section 7]{GuOd81}, they decide which word $r$ gives the ``biggest'' word system:  $\# X_n(\gklammer{r})\geq \# X_n(\gklammer{s})$ if and only if the autocorrelation of $r$ is less or equal to the autocorrelation of $s$. There is a nice survey in Odlyzko \cite{Odl85}. Some more methods to determine the generating function can be found in Goulden and Jackson \cite{GoJa79}.

\subsection*{Growth rates}
One may analyze the asymptotic behaviour of the cardinality sequence. It is clear, that the sequence may break down (simply replace $X_n$ with $\emptyset$ for all $n\geq N$), or that $d_n$ is limited by $d^n$ for the full word system $A^*$ with $\#A=d$. But there are more interesting results. For instance, Shur shows in  \cite{Shu06} that for all $s\in\R_+$ there are word systems with \it{asymptotic growth rate} $n^s$. In \cite{Shu09}, he shows that there are word systems with asymptotic growth rate larger than every polynomial and smaller than every exponential function.

\subsection*{Subword complexities}
It seems that it is easier to get estimates when restricting to the subclass of word systems  $X^{w\btimes}$ consisting of all (finite) subwords of a single (usually) infinite word $w$, in which case the cardinality sequence is referred to as \it{subword complexity}. (These word systems are particularly relevant for computer science.) As early as 1938, Morse and Hedlund \cite{MoHe38} provided a necessary condition for that a sequence occurs as subword complexity: Either $\#X^w_{n+1}>\#X^w_n$ for all $n\in\mathbb{N}$ or $\#X^w_n$ is eventually constant.  Ferenczi \cite[Section 3]{Fer99} provides several results on  subword complexities. For instance,
if $\#X_n^w\leq an$ for all $n\in\N$, then there is $C$ such that $\#X_{n+1}^w-\#X_n^w\leq Ca^3$ for all $n$.


\section{The cardinality sequence of a word system: Some new results}
\label{sec:card-sequ-cart}

In this section we present some results which, we believe, may be new. The results are formulated for cardinality sequences of word systems. Of course, from Corollary~\ref{CSisoWScor} it follows that all these results remain true for Cartesian systems, and from Theorem~\ref{sec:cart-syst-word:main-theorem} it follows that all these results remain true for subproduct systems replacing `cardinality sequence' with `dimension sequence'. It also should be noted that some results and their consequences are are much easier to prove for Cartesian systems than for word systems. (For Cartesian systems, Theorem~\ref{(dn)thm} is a triviality while the proof of Theorem~\ref{(n+k)thm} runs very smoothly. For both we cannot even imagine how to write down a proof using only word systems.)

\subsection*{Local to global}

Let $X^\btimes$ be a word system over $A$ and let $R$ be the unique reduced set of excluded words such that $X^\btimes=X^\btimes(R)$. It is noteworthy that in order to determine $X_i$ and, therefore, $d_i=\#X_i$ for $i=1,\ldots,k$, we only need to know $R_i$ for $i=1,\ldots,k$. In order to `realize' the partial sequence $d_1,\ldots,d_k$, it does not matter what the word system $X^\btimes$ does for $i>k$, nor, equivalently, what the $R_i$ are for $i>k$. We may cut down $X^\btimes$ by assuming $X_i=\eset$ for $i>k$; this is an easy choice but, possibly, not the most clever, because it makes the corresponding $R_i$ rather big. We also may cut down $R$ by assuming $R_i=\eset$ for $i>k$; this gives the biggest word system with the partial sequence $X_i$ for $i=1,\ldots,k$ with the corresponding $R_i$ for $i=1,\ldots,k$. This choice has the advantage that now the resulting truncated set of excluded words is finite, so, all results for generating functions for finite sets of excluded words (for instance, those in \cite{GuOd81}) are applicable for checking if the partial sequence $d_1,\ldots,d_k$ can be realized for suitable choices of $R_1,\ldots,R_k$.

If, for a sequence, $d_1,d_2,\ldots$, we can realize $d_1,\ldots,d_k$ for each $k$ by choosing $R_1,\ldots,R_k$ in such a way that for $k+1$ we just add $R_{k+1}$ to $R_1,\ldots,R_k,$ then, of course, the whole sequence of $R_k$ obtained in that inductive way determines a word system $X^\btimes$ with $\#X_k=d_k$ for all $k$. But what, if we can realize each finite subsequence $d_1\ldots,d_k$ but without being able to fix the $R_i$ for $i\leq k$ in the next step? The following theorem shows that this \it{local realizability} of the sequence $d_1,d_2,\ldots$ is sufficient.

\begin{theorem}\label{theo:local_to_global}
  Let $\bfam{d_n}_{n\geq1}$ be a sequence of nonnegative
  integers. Suppose for every $k\in\mathbb{N}$ there exists a word
  system $Y^{(k)\btimes}$ with $\# Y^{(k)}_i=d_i$ for $i=1,\dots, k$. Then there
  exists a word system $X^\btimes$ with $\# X_i=d_i$ for all $i\geq
  1$.
\end{theorem}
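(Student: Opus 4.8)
The plan is to prove this by a compactness (König's lemma) argument, after first normalising all the alphabets to a single fixed finite one. The preliminary reduction is the crux. Since every letter occurring in a word of a word system is itself a length-one subword and hence lies in $X_1$, each word system supplied by the hypothesis lives, after discarding letters that never occur, over an alphabet of cardinality $\#Y_1=d_1$. Relabelling along a bijection of alphabets induces a length- and subword-preserving isomorphism $A'^*\to A^*$, so it carries word systems to word systems with the same cardinality sequence; thus for each $k$ I may assume the given $Y^\btimes$ is a word system over the one fixed finite alphabet $A:=\gklammer{1,\ldots,d_1}$ with $\#Y_i=d_i$ for $i=1,\ldots,k$. (If $d_1=0$ the hypothesis forces $d_i=0$ for all $i$ and the empty word system already works, so assume $d_1\geq1$, whence $A$ is a nonempty finite set.)

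Next I would set up the search tree. Recall from Proposition~\ref{recurprop} and the remark immediately following it that a family $(X_n)_{n\in\N_0}$ with $X_0=\gklammer{\Lambda}$ and $X_n\subseteq A^n$ is a word system over $A$ if and only if $X_{n+1}\subseteq AX_n\cap X_nA$ for all $n$. Call a finite family $(X_1,\ldots,X_k)$, with $X_i\subseteq A^i$, an \emph{admissible $k$-truncation} if $X_1\subseteq A$, $X_{i+1}\subseteq AX_i\cap X_iA$ for $1\leq i<k$, and $\#X_i=d_i$ for $i=1,\ldots,k$. Let $T$ be the set of all admissible truncations of all lengths $k\geq0$ (the empty truncation as root), ordered by the initial-segment relation, with parent map given by deleting the last entry $X_k$. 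Deleting the last entry of an admissible truncation again gives one, so $T$ is genuinely a tree rooted at the empty truncation. Since $A$ is finite, each $A^i$ is finite; hence there are only finitely many admissible truncations at each level and each node has finitely many children, so $T$ is finitely branching.

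Then local realizability forces $T$ to be infinite. For each $k$, the word system $Y^\btimes$ from the hypothesis restricts to $(Y_1,\ldots,Y_k)$, which is an admissible $k$-truncation by Proposition~\ref{recurprop} (its converse guarantees the inclusions $Y_{i+1}\subseteq AY_i\cap Y_iA$), and hence a node of $T$ at level $k$. Thus $T$ has nodes at every level; being finitely branching and having arbitrarily tall nodes, it is infinite, and König's lemma yields an infinite branch $N_0\subset N_1\subset N_2\subset\cdots$, where each $N_k$ is an admissible $k$-truncation whose restriction is $N_{k-1}$. Along this branch the truncations are mutually consistent, so setting $X_i$ to be the $i$-th component of any $N_k$ with $k\geq i$ is well defined and gives a single family $X^\btimes=(X_n)_{n\in\N_0}$ with $X_0=\gklammer{\Lambda}$, $X_n\subseteq A^n$, $\#X_n=d_n$, and $X_{n+1}\subseteq AX_n\cap X_nA$ for all $n$.

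Finally I would invoke Proposition~\ref{recurprop} once more: the recurrence inclusions make $X^\btimes$ a word system, and by construction $\#X_n=d_n$ for all $n\geq1$, which is exactly the claim. The main obstacle is the opening move rather than the compactness itself: the hypothesis hands us word systems over a priori different and possibly infinite alphabets, whereas König's lemma needs a finitely branching tree. The observation that the effective alphabet has cardinality precisely $d_1$, combined with the relabelling isomorphism, is what collapses all these systems into searches within one finitely branching tree; after that, everything is the routine inverse-limit/compactness conclusion.
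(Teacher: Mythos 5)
Your proof is correct, and in essence it is the same argument as the paper's: both rest on (i) normalizing all the hypothesized word systems to one fixed finite alphabet of cardinality $d_1$ (the paper phrases this as ``every word system may be considered as a word system over $X_1$,'' exactly your observation that every occurring letter is a length-one subword), and (ii) a compactness argument exploiting that there are only finitely many possibilities at each level. The difference lies only in the packaging of (ii): the paper views word systems as points of $\bigtimes_{n}\sP(A^n)$ with the product of discrete topologies, extracts an entrywise eventually constant subsequence via Tychonov plus sequential compactness, and verifies that the set of word systems is closed by testing the subword condition directly; you instead build the tree of admissible truncations and invoke K\"onig's lemma, then certify the resulting family via the recurrence characterization $X_{n+1}\subset AX_n\cap X_nA$ of Proposition~\ref{recurprop} together with its converse. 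These are interchangeable formulations of the same compactness principle --- the paper itself remarks that the subsequence ``can be constructed explicitly by hand'' --- and your tree version is arguably the more elementary, dispensing with topology altogether. Your explicit handling of the degenerate case $d_1=0$ and your justification that the effective alphabet has cardinality exactly $d_1$ are correct and slightly more careful than what the paper writes out.
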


\begin{proof}
 Every word system $X^\btimes$ may be considered as a word system over $X_1$. And if two word systems, say $X^\btimes$ and $X'^\btimes$, fulfill $\#X_1=\#X'_1$, then any bijection from $X'_1$ to $X_1$ corresponds to an isomorphism from $X'^\btimes$ to a word system over $X_1$, $X''^\btimes$, isomorphic to $X'^\btimes$. So, we may assume that the $Y^{(k)\btimes}$ realizing $d_1,\ldots,d_k$ are over a fixed finite alphabet $A$.

Let us consider word systems as elements of the product
  \begin{equation*}
    \sW(A):=\bigtimes_{n\in \N_0}\sP(A^n)
  \end{equation*}
  of the power sets $\sP(A^n)$ of $A^n$. By $\WS(A)\subset\sW(A)$ we denote the set of all word
  systems over $A$. Let $\bfam{Y^{(k)\btimes}}_{k\in\N}$ be a
  sequence in $\WS(A)$ fulfilling $\# Y^{(k)}_{i}=d_i$ for
  $i=1,\dots, k$. We will show:
  \begin{enumerate}
  \item
There is a subsequence $\bfam{Y^{(k_n)\btimes}}_{n\in\N}$ of $\bfam{Y^{(k)\btimes}}_{k\in\N}$ such that for each $i\in\N$ the sequence $\bfam{Y^{(k_n)\btimes}_i}_{n\in\N}$ is eventually constant, say, $Y^{(k_n)\btimes}_i=:X_i$ for sufficiently big $n$.

  \item
The $X_i$ form a word system $X^\btimes$ with $\#X_i=d_i$.
  \end{enumerate}
Such a sequence can be constructed explicitly by hand. But one has to introduce \it{ad hoc} total orders on $\sP(A^n)$, and writing it down requires lots of more indices. We prefer to introduce a topology on $\sW(A)$ that allows to apply Tychonov's theorem.

We equip $\sP(A^n)$ with the discrete topology and $\sW(A)$ with the product topology. So, convergence in $\sP(A^n)$ means eventually constant, and convergence in $\sW(A)$ means eventually constant entry-wise. Since $A$ is assumed finite, $\sP(A^n)$ is finite, hence, compact. By Tychonov's theorem, $\sW(A)$ is compact. Since $\sW(A)$ is first countable, it is even sequentially compact. This proves (1) and, of course, it proves that the limit $X^\btimes$ of the subsequence of $\bfam{Y^{(k)\btimes}}_{k\in\N}$ fufills $\#X_i=d_i$ for all $i$.

To show that $X^\btimes$ is a word system, we show that $\WS(A)$ is closed in $\sW(A)$. Suppose $Z\in\sW(A)$ is not a word system. That is, there exists a word $w\in Z_k$ with a subword $y$ of $w$ with $y\in A^m\setminus Z_m$. Then the set $U:=\gklammer{Z_0}\times\gklammer{Z_1}\times\dots\times\gklammer{Z_k}\times\bigtimes_{n>k} P(A^n)$ is an open neighbourhood of $Z$ and no element of $U$ is a word system. This shows that $\sW(A)\setminus\WS(A)$ is open, hence, $\WS(A)$ is closed.
\end{proof}

\subsection*{`Thinning out' Cartesian systems}

We present some results how to select from a Cartesian system a subsequence and turn that subsequence again into a Cartesian system. We know that by Corollary~\ref{CSisoWScor}, every Cartesian system is isomorphic to a word system, and all results about cardinality sequences also apply to word systems. But it would very cumbersome, indeed, if we had to turn these fresh Cartesian systems into word systems, explicitly. These results are, therefore, instances that illustrate how powerful the considerably more flexible notion of Cartesian system can be as compared with the more restrictive notion of word system.

Let us start with the following triviality---and imagine how notationally complicated it would be to prove it, using only word systems.

\begin{theorem}\label{(dn)thm}
  Let $X^\btimes$ be a Cartesian system with injections $i_{m,n}$ and fix $k\in\N$. Then the family $Y^{\btimes}=\bfam{Y_n}_{n\in\N_0}$ with $Y_n:=X_{nk}$ and with the injections $j_{m,n}:=i_{mk,nk} $ is a Cartesian system.
\end{theorem}

This theorem holds, likewise, for subproduct systems. The next result relies on the important property that, unlike for tensor products, in a Cartesian product of sets there are \it{canonical} projections onto the factors; see Proposition~\ref{prop:word_factorization}. For all sets $S_1$ and $S_2$, define $P_i\colon S_1\times S_2\rightarrow S_i$ by $P_i(s_1,s_2)=s_i$.

\begin{theorem}\label{(n+k)thm}
  Let $X^\btimes$ be a Cartesian system with injections $i_{m,n}$ and fix $k\in\N$. Then the family
  $Y^{\btimes}=\bfam{Y_n}_{n\in\mathbb{N}}$ with
  \begin{equation}
    Y_n=
    \begin{cases}
      X_{n+k},&n>0\\
      \gklammer{\Lambda},&n=0
    \end{cases}
  \end{equation}
together with the injections
  \begin{equation}\label{eq:2}
    j_{m,n}:=(P_{1}\circ i_{m+k,n},P_{2}\circ i_{m,k+n})
  \end{equation}
for $m,n\ge1$ and $j_{m,0}$ and $j_{0,n}$ being (necessarily) the canonical injections is a Cartesian system.
\end{theorem}

\begin{proof}
Note that the construction `commutes' with isomorphisms $\alpha^\btimes\colon X^\btimes\rightarrow X'^\btimes$. (Indeed, since $i'_{m,n}\circ\alpha_{m+n}=(\alpha_m\times\alpha_n)\circ i_{m,n}$, we find
\bmun{
(P_{1}\circ i'_{m+k,n},P_{2}\circ i'_{m,k+n})\circ\alpha_{m+k+n}
~=~
(P_{1}\circ(\alpha_{m+k}\times\alpha_n)\circ i_{m+k,n},P_{2}\circ(\alpha_m\times\alpha_{k+n})\circ i_{m,k+n})
\\
~~~~~~
~=~
(\alpha_{m+k}\circ P_{1}\circ i_{m+k,n},\alpha_{k+n}\circ P_{2}\circ i_{m,k+n})
~=~
(\alpha_{m+k}\times\alpha_{k+n})\circ(P_{1}\circ i_{m+k,n},P_{2}\circ i_{m,k+n}),
}\emun
so that $j'_{m,n}\circ\alpha_{m+k+n}=(\alpha_{m+k}\times\alpha_{k+n})\circ j_{m,n}$.) By Corollary~\ref{CSisoWScor}, every Cartesian system is isomorphic to a word system. We, therefore, may assume that $X^\btimes$ is a word system.

  For a word system $X^{\btimes}$, the definition in \eqref{eq:2} leads to 
  \begin{equation}\label{eq:3}
    j_{m,n}(a_1,\ldots, a_{m+n+k}):=\bigl((a_1,\ldots, a_{m+k}),(a_{m+1},\ldots,a_{m+n+k})\bigr)
  \end{equation}
  for all $m,n>0$. In other words, the $k$ letters `in the middle' $a_{m+1},\cdots, a_{m+k}$ are `replicated' once to the right part of the left factor and once to the left part of the right factor. The maps $j_{m,n}:X_{m+n+k}\to X_{m+k}\times X_{n+k}$ defined in \eqref{eq:3} are clearly injective. The computation
    \begin{align*}
    &\quad((j_{m,n}\times \id_{Y_\ell})\circ j_{m+n,\ell})(a_1,\ldots, a_{m+n+\ell+k})\\
    &=(j_{m,n}\times \id_{Y_\ell})\bigl((a_1,\ldots, a_{m+n+k}),(a_{m+n+1},\ldots, a_{m+n+\ell+k})\bigr)\\
    &=\bigl((a_1,\ldots, a_{m+k}),(a_{m+1},\ldots ,a_{m+n+k}),(a_{m+n+1},\ldots, a_{m+n+\ell+k})\bigr)\\
    &=(\id_{Y_m}\times j_{n,\ell})\bigl((a_1,\ldots, a_{m+k}),(a_{m+1},\ldots, a_{m+n+\ell+k})\bigr)\\
    &=((\id_{Y_m}\times j_{n,\ell})\circ j_{m,n+\ell})(a_1,\ldots, a_{m+n+\ell+k})
  \end{align*}
  proves associativity for $m,n,\ell\ge1$. For the cases involving $m=0$ or $n=0$ or $\ell=0$ there is nothing to prove. So the $Y_{n}=X_{n+k}$ together with the maps $j_{m,n}$ form a Cartesian system. 
\end{proof}

\begin{corollary}\label{cor:shift-condition}
  Suppose for $n\in\N$ there is a function $f\colon \N_0^{n-1}\rightarrow\N_0$ such that for every word system $X^\btimes$ we have
  \begin{equation*}
    \# X_{n}\leq f(\# X_1,\# X_2,\dots,\# X_{n-1}).
  \end{equation*}
Then for every Cartesian system $X^\btimes$ we have
  \begin{equation*}
    \# X_{na+b}\leq f(\# X_{a+b},\# X_{2a+b},\dots,\# X_{(n-1)a+b}).
  \end{equation*}
\end{corollary}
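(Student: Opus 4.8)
The plan is to realise the arithmetic progression $m\mapsto ma+b$ as the sequence of fibre indices of a new Cartesian system, obtained by composing the two preceding thinning-out constructions, and then to import the word-system hypothesis via Corollary~\ref{CSisoWScor}. So I would fix $a\in\N$ and $b\in\N_0$ and let $X^\btimes$ be an arbitrary Cartesian system.

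First I would apply the immediately preceding theorem (the shift construction) with shift parameter $b$, producing a Cartesian system $W^\btimes$ with $W_m=X_{m+b}$ for $m\ge1$ and $W_0=\gklammer{\Lambda}$; for $b=0$ there is nothing to shift and one simply takes $W^\btimes:=X^\btimes$. Next I would apply Theorem~\ref{(dn+k)thm} with $k=a$ to $W^\btimes$, producing a Cartesian system $Z^\btimes$ with $Z_m=W_{ma}$. Since $a\ge1$ gives $ma\ge1$ for every $m\ge1$, this means $Z_m=W_{ma}=X_{ma+b}$ for all $m\ge1$, while $Z_0=W_0=\gklammer{\Lambda}$ is the required one-point fibre. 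The order of the two operations is essential here: shifting by $b$ and then dilating by $a$ yields index $ma+b$, whereas dilating first and shifting afterwards would yield the wrong index $ma+ba$.

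I would then invoke Corollary~\ref{CSisoWScor} to obtain a word system $Z'^\btimes$ isomorphic to $Z^\btimes$. Since an isomorphism of Cartesian systems is a fibre-wise family of bijections, it preserves cardinalities, so $\#Z'_m=\#Z_m=\#X_{ma+b}$ for every $m\ge1$. Applying the hypothesis to the \emph{word} system $Z'^\btimes$ at the index $n$ gives
\begin{equation*}
  \#Z'_n\leq f(\#Z'_1,\#Z'_2,\dots,\#Z'_{n-1}),
\end{equation*}
and substituting $\#Z'_i=\#X_{ia+b}$ for $i=1,\dots,n$ turns this into
\begin{equation*}
  \#X_{na+b}\leq f(\#X_{a+b},\#X_{2a+b},\dots,\#X_{(n-1)a+b}),
\end{equation*}
which is exactly the assertion. (The degenerate case $n=1$, where $f$ has no arguments, is covered without change, since then the hypothesis merely bounds $\#Z'_1=\#X_{a+b}$ by a constant.)

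I expect no genuine obstacle in this argument: all the real work is already packaged in the two preceding thinning-out theorems and in Corollary~\ref{CSisoWScor}. The only points demanding care are clerical—getting the composition in the right order so that the fibre index comes out as $ma+b$, and verifying that the degenerate fibre $Z_0$ is honestly a one-point set, so that $Z'^\btimes$ is a legitimate word system to which the hypothesis may be applied.
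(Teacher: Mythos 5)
Your proposal is correct and is essentially the paper's argument: the paper's one-line proof simply cites the two preceding thinning-out theorems to conclude that $Y_n=X_{na+b}$ forms a Cartesian system, with the passage to a word system via Corollary~\ref{CSisoWScor} left implicit. You have merely spelled out the composition order and the cardinality-preserving isomorphism, both of which are correct.
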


\begin{proof}
  By the preceding two theorems the $Y_n=X_{na+b}$ form a Cartesian system. 
\end{proof}

\begin{corollary}
For every dimension sequence $d_n=\#X_n$ of a word system $X^\btimes$, we have
  \begin{equation}\label{eq:13}
    d_{m+n+k}\leq d_{m+k}d_{n+k}
  \end{equation}
  for all $m,n,k\in\mathbb{N}$. In particular, $d_{k+1}\leq d_k^2$ for every $k>0$. 
\end{corollary}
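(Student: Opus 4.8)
The plan is to derive the inequality $d_{m+n+k}\le d_{m+k}d_{n+k}$ as a direct consequence of Corollary~\ref{cor:shift-condition} applied to the elementary subproduct/subword bound for $n=2$, and then specialize to obtain $d_{k+1}\le d_k^2$. First I would establish the base inequality: for every word system $X^\btimes$ we have $\# X_2\le (\# X_1)^2$. This is immediate since $X_2\subset A^2$ and, by Proposition~\ref{prop:word_factorization}\eqref{P2}, any $w\in X_2$ factors as $w=w_1w_2$ with $w_1,w_2\in X_1$ (because $X^\btimes$ is closed under subwords, so both letters lie in $X_1$); hence $\# X_2\le \#(X_1\times X_1)=(\# X_1)^2$. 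So the function $f\colon\N_0\rightarrow\N_0$ given by $f(x)=x^2$ satisfies $\# X_2\le f(\# X_1)$ for every word system.

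Next I would feed this into Corollary~\ref{cor:shift-condition} with $n=2$ and the parameters $a,b$ chosen to hit the desired indices. The corollary gives, for every Cartesian system, $\# X_{2a+b}\le f(\# X_{a+b})=(\# X_{a+b})^2$. To recover the fully general form $d_{m+n+k}\le d_{m+k}d_{n+k}$, which is not symmetric in its two factors, the cleaner route is to combine the two preceding theorems exactly as in the proof of Corollary~\ref{cor:shift-condition}: the construction $Y_n=X_{n+k}$ (shifting by $k$) already yields a Cartesian system whose structure maps replicate the middle $k$ letters, and the defining factorization~\eqref{eq:3} shows directly that the injection $j_{m,n}\colon X_{m+n+k}\rightarrow X_{m+k}\times X_{n+k}$ embeds $X_{m+n+k}$ into the product $X_{m+k}\times X_{n+k}$. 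Counting cardinalities of an injection into a product immediately gives $d_{m+n+k}\le d_{m+k}\,d_{n+k}$.

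Finally, the special case follows by setting $m=n=1$: this yields $d_{k+2}\le d_{k+1}^2$, and more directly taking the shift-by-$k$ system together with $\# X_2\le(\# X_1)^2$ gives $d_{k+2}\le d_{k+1}^2$; reindexing (replacing $k$ by $k-1$) produces $d_{k+1}\le d_k^2$ for every $k>0$. I expect no serious obstacle here: the whole statement is a bookkeeping specialization of machinery already proved. The only point requiring a moment's care is that the factor structure is asymmetric, so one must use the actual injection~\eqref{eq:3} rather than the symmetric formulation $f(x)=x^2$ if one wants the sharp inequality $d_{m+n+k}\le d_{m+k}d_{n+k}$ with two distinct factors; invoking the injection $j_{m,n}$ directly sidesteps this cleanly.
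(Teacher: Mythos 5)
Your proof is correct and takes essentially the same route as the paper's: both deduce \eqref{eq:13} by applying submultiplicativity (an injection into a Cartesian product) to the shifted Cartesian system $Y_n=X_{n+k}$ furnished by the preceding theorem, and both cover the case $d_2\le d_1^2$ by direct submultiplicativity. Your detour through Corollary~\ref{cor:shift-condition} with $f(x)=x^2$ is unnecessary, as you yourself note, but your final argument via the injection $j_{m,n}\colon X_{m+n+k}\rightarrow X_{m+k}\times X_{n+k}$ is exactly what the paper's one-line proof implicitly invokes.
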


\begin{proof}
Equation \eqref{eq:13} follows, because the sequence $d_n$ is submultiplicative ($d_{m+n}\le d_md_n$). The formula $d_{k+1}\leq d_k^2$, in the case $k=1$ is directly submultiplicativity, and in the case $k>1$ follows from \eqref{eq:13} with $m=n=1$.
\end{proof}

\begin{corollary}
  Not every submultiplicative sequence $d_n$ is the cardinality sequence of a word system.
\end{corollary}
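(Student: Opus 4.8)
The plan is to exploit the necessary condition just proved: by the preceding corollary, every cardinality sequence of a word system satisfies $d_{k+1}\le d_k^2$ for all $k>0$. For $k=1$ this is nothing but submultiplicativity ($d_2\le d_1^2$), but for $k\ge 2$ it is strictly stronger. Already at $k=2$ the submultiplicativity constraints on $d_3$ amount only to $d_3\le d_1 d_2$, and $d_1 d_2$ can exceed $d_2^2$ as soon as $d_1>d_2$. So the strategy is to write down a submultiplicative sequence that first decreases, $d_1>d_2$, and then takes $d_3$ in the range $d_2^2<d_3\le d_1 d_2$, forcing the word-system inequality $d_3\le d_2^2$ to fail.

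Concretely, I would take the sequence $(d_n)_{n\ge 0}=(1,2,1,2,1,1,1,\dots)$, that is $d_0=1$, $d_1=2$, $d_2=1$, $d_3=2$, and $d_n=1$ for $n\ge 4$. The violation is then immediate: $d_3=2>1=d_2^2$, so by the corollary this sequence cannot be the cardinality sequence of any word system (equivalently, the dimension sequence of any subproduct system).

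It remains to check that this sequence is genuinely submultiplicative, i.e.\ $d_{m+n}\le d_m d_n$ for all $m,n\ge 0$. This is the only computation, and it is routine: every entry is $\ge 1$, so the only inequalities that are not automatic are those whose left-hand side equals the ``bump'' value $2$, that is $m+n\in\{1,3\}$. For $m+n=1$ one has $d_1=2=d_0 d_1$, and for $m+n=3$ the relevant products $d_1 d_2$, $d_2 d_1$, and $d_0 d_3$ all equal $2=d_3$. Hence submultiplicativity holds (with equality only in these borderline cases) and is strict everywhere else.

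The construction is short enough that there is no real obstacle; the only point worth flagging is the structural reason such an example can exist at all. The gap between submultiplicativity and the word-system bound $d_{k+1}\le d_k^2$ opens up precisely when the sequence decreases ($d_1>d_2$), which is why a ``valley followed by a bump'' can be submultiplicative yet forbidden for word systems. One could equally invoke the finer necessary condition $d_{m+n+k}\le d_{m+k}d_{n+k}$ with $m=n=k=1$, which yields the very same inequality $d_3\le d_2^2$ and hence the same contradiction.
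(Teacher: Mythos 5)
Your proof is correct and follows essentially the same route as the paper: both invoke the necessary condition $d_3\le d_2^2$ from the preceding corollary and violate it with the initial segment $d_1=2$, $d_2=1$, $d_3=2$. The only (immaterial) difference is your tail $d_n=1$ for $n\ge4$ versus the paper's $d_n=0$, both of which preserve submultiplicativity.
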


\begin{proof}
A cardinality sequence fulfills $d_3\leq d_2^2$. However, the sequence $d_1=2$,
$d_2=1$, $d_3=2$, and $d_k=0$ for $k>3$ is submultiplicative, but $d_3\nleq
  d_2^2$.
\end{proof}

Of course, from Corollary~\ref{CSisoWScor} it follows that all corollaries remain true for Cartesian systems, and from Theorem~\ref{sec:cart-syst-word:main-theorem} it follows that all corollaries remain true for subproduct systems replacing cardinality sequence with dimension sequence.

\subsection*{A sufficient criterion motivated by submultiplicativity}

It is well known that for every submultiplicative sequence $(d_n)_{n\in\N_0}$ of nonnegative integers we have $\lim_{n\to\infty} \sqrt[n]{d_n}=\inf_n\sqrt[n]{d_n}$. On the other hand, if we assume the limit is approached monotonously, that is, if we assume
$\sqrt[m+1]{d_{m+1}}\leq\sqrt[m]{d_{m}}$ for all $m\in\N_0$, from
\begin{equation*}
  d_{m+n}=\sqrt[m+n]{d_{m+n}}^{m}\sqrt[m+n]{d_{m+n}}^{n}\leq\sqrt[m]{d_m}^m\sqrt[n]{d_n}^n=d_m d_n
\end{equation*}
we get that $d_n$ is submultiplicative. We may ask, if the condition $\sqrt[m+1]{d_{m+1}}\leq\sqrt[m]{d_{m}}$ for all $m\in\N_0$ is sufficient to be the cardinality sequence of a word system. It turns out that this condition is neither sufficient (Example~\ref{sec:subpr-syst-cart:example_keine_18}) nor necessary (Example~\ref{sec:subpr-syst-cart:example_rand}). However, we may modify the condition to make it at least sufficient.

For $a\in\R$, denote $\left\lceil a\right\rceil:=\min\gklammer{n\in
    \mathbb{Z}\mid n\geq a}$ and $\left\lfloor a\right\rfloor:=\max\gklammer{n\in
    \mathbb{Z}\mid n\leq a}$.

\begin{theorem}\label{thm:suff-crit-motiv}
  Let $(d_n)_{n\in\N_0}$ be a sequence of nonnegative integers such that $d_0=1$ and
  \begin{equation}
    \label{eq:10}
    \left\lceil \sqrt[m+1]{d_{m+1}}\,\right\rceil\leq \left\lfloor\sqrt[m]{d_{m}}\right\rfloor
  \end{equation}
for all $m\ge1$.  Then their exists a word system $X^\btimes$ with $\#X_n=d_n$ for all $n\in\mathbb{N}$.
\end{theorem}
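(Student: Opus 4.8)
The plan is to build the word system explicitly and in closed form, turning the somewhat opaque condition \eqref{eq:10} into a clean monotonicity statement about integer roots. I would set $b_m:=\left\lfloor\sqrt[m]{d_m}\right\rfloor$ for $m\geq1$ and first record the two inequalities that drive everything. From $b_m\leq\sqrt[m]{d_m}$ we get $b_m^m\leq d_m$. From \eqref{eq:10}, $\left\lceil\sqrt[m+1]{d_{m+1}}\right\rceil\leq b_m$, whence $d_{m+1}\leq\left\lceil\sqrt[m+1]{d_{m+1}}\right\rceil^{m+1}\leq b_m^{m+1}$; and since $\left\lfloor\cdot\right\rfloor\leq\left\lceil\cdot\right\rceil$, also $b_{m+1}\leq b_m$. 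Thus $(b_m)_{m\geq1}$ is non-increasing and
\[
b_n^n\leq d_n\leq b_{n-1}^n\quad(n\geq2),\qquad d_1=b_1.
\]

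Next I would fix the alphabet $A:=\{1,\ldots,d_1\}=\{1,\ldots,b_1\}$ with its natural order and define, for $n\geq1$,
\[
X_n:=\{1,\ldots,b_n\}^n\cup S_n,
\]
where $S_n$ is an \emph{arbitrary} subset of $\{1,\ldots,b_{n-1}\}^n\setminus\{1,\ldots,b_n\}^n$ of cardinality $d_n-b_n^n$ (with $S_1:=\emptyset$ and $X_0:=\{\Lambda\}$). Such an $S_n$ exists because the displayed inequalities give $0\leq d_n-b_n^n\leq b_{n-1}^n-b_n^n=\#\bigl(\{1,\ldots,b_{n-1}\}^n\setminus\{1,\ldots,b_n\}^n\bigr)$. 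By construction $\#X_n=b_n^n+(d_n-b_n^n)=d_n$, which is the required count.

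It then remains to verify that $X^{\btimes}=(X_n)_{n\in\N_0}$ is closed under subwords in the sense of Definition~\ref{def:word_system}. The key observation is that every word in $X_n$ uses only letters $\leq b_{n-1}$: the base part $\{1,\ldots,b_n\}^n$ uses letters $\leq b_n\leq b_{n-1}$, and $S_n$ uses letters $\leq b_{n-1}$ by definition. Hence, if $w\in X_n$ and $y$ is a subword of $w$ of length $k<n$, then every letter of $y$ is $\leq b_{n-1}\leq b_k$, using that $(b_m)$ is non-increasing and $k\leq n-1$; therefore $y\in\{1,\ldots,b_k\}^k\subseteq X_k$. So subwords never leave the family, and $X^{\btimes}$ is a word system realizing $(d_n)$.

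The main point — and the reason the ceiling is genuinely needed in \eqref{eq:10}, rather than mere monotonicity of $\sqrt[m]{d_m}$ — is the upper bound $d_n\leq b_{n-1}^n$: it guarantees that the extra words in $S_n$, beyond the $b_n^n$ words over the small alphabet $\{1,\ldots,b_n\}$, can all be drawn from the only slightly larger alphabet $\{1,\ldots,b_{n-1}\}$. Once this fits, the monotonicity $b_{n-1}\leq b_k$ for $k\leq n-1$ makes subword closure automatic and completely decoupled across lengths, so no inductive bookkeeping (and, in particular, no appeal to the local-to-global Theorem~\ref{theo:local_to_global}) is required. I would finally note that the degenerate cases — $d_1=0$, which forces $A=\emptyset$ and all $d_n=0$, or $b_n=0$ for large $n$ — are covered verbatim by the same formulas, with the corresponding $X_n$ simply empty.
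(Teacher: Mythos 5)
Your proof is correct and takes essentially the same route as the paper's: there, one fixes an arbitrary $X_n$ with $\{1,\ldots,\lfloor\sqrt[n]{d_n}\rfloor\}^n\subset X_n\subset\{1,\ldots,\lceil\sqrt[n]{d_n}\,\rceil\}^n$ and verifies the Cartesian-subsystem condition $X_{m+n}\subset X_mX_n$ by chaining \eqref{eq:10}, whereas your sandwich $\{1,\ldots,b_n\}^n\subset X_n\subset\{1,\ldots,b_{n-1}\}^n$ is a slightly looser version of the same nested-cube idea (note $\lceil\sqrt[n]{d_n}\,\rceil\le b_{n-1}$ by \eqref{eq:10}), finished by checking subword closure directly instead. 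No gaps.
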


\begin{proof}
Set $X_0=\gklammer{\Lambda}$, $X_1=\gklammer{1,\dots, d_1}$, and choose arbitrary $X_n\subset X_1^n$ such that
 \begin{equation}
   \label{eq:4}
   \gklammer{1,\dots,\left\lfloor\sqrt[n]{d_{n}}\right\rfloor}^n\subset X_n\subset \gklammer{1,\dots, \left\lceil \sqrt[n]{d_{n}}\,\right\rceil}^n.
 \end{equation}
Since $\left\lfloor\sqrt[n]{d_{n}}\right\rfloor^n\leq d_n\leq\left\lceil\sqrt[n]{d_{n}}\right\rceil^n$, this is always possible. We find
\begin{align*}
  X_{m+n}&\subset \gklammer{1,\dots, \left\lceil \sqrt[m+n]{d_{m+n}}\,\right\rceil}^{m+n}\\
  &=\gklammer{1,\dots, \left\lceil \sqrt[m+n]{d_{m+n}}\,\right\rceil}^{m}\times \gklammer{1,\dots, \left\lceil \sqrt[m+n]{d_{m+n}}\,\right\rceil}^{n}\\
  &\subset\gklammer{1,\dots, \left\lfloor \sqrt[m]{d_{m}}\right\rfloor}^{m}\times\gklammer{1,\dots, \left\lfloor \sqrt[n]{d_{n}}\right\rfloor}^{n}\\
  &\subset X_m\times X_n
\end{align*}
for all $m,n\in\N_0$. So, by Theorem~\ref{WS=CSSFthm}, the $X_n$ form a word system over $X_1$.
\end{proof}

By the same Example~\ref{sec:subpr-syst-cart:example_rand}, this sufficient condition is not necessary.

\SQ{\lf  
  \begin{note}\label{note:dimension-rational-time}
    In \cite[Section 4.2]{G15}, our results on thinning out Cartesian systems have been combined with the Balogh Bollob{\'a}s estimate \eqref{eq:BaBo} to show that a function $d\colon \mathbb{Q}_+\to \mathbb{N}_0$ is the \emph{dimension function} of a rational times subproduct system, that is, $d(t)=\dim H_t$ for some subproduct system $H^\botimes=\bfam{H_t}_{t\in\mathbb{Q}_+}$, if and only if 
    \[d(s)\leq \left\lceil\frac{d(t)+1}{2}\right\rceil\left\lfloor\frac{d(t)+1}{2}\right\rfloor\]
    holds for all $s>t$. Analogous results hold for the cardinality functions of rational time Cartesian systems and for the cardinality functions of continuous time Cartesian systems. The problem of characterizing dimension functions of continuous time subproduct systems is not yet solved, and the solution might depend on additional continuity assumptions.
  \end{note}
}


\section{(Counter)examples with word systems of graphs}
\label{sec:count-with-word}

In this section, we establish a connection between word systems and directed graphs. In fact, every directed graph without multiple edges is a word system. Moreover, every word system is a subsystem of such a \it{graph system}. As application, we provide examples that show that $\sqrt[n+1]{d_{n+1}}\leq\sqrt[n]{d_{n}}$ for all $n\in\N_0$ is neither necessary nor sufficient for the existence of a word system with cardinality sequence $d_n$. Of course, this implies that the sufficient condition in Theorem~\ref{thm:suff-crit-motiv}, which is even stronger, is not necessary.

By a \hl{graph}, we will always mean a directed graph, possibly with loops, but without multiple edges. That is, a graph is a pair $(V,E)$, where $V$ is a set, whose elements are called \hl{vertices}, and a subset $E$ of $V\times V$, whose elements are called \hl{edges}.\pagebreak[3]

\begin{theorem}{~}
\begin{enumerate}
\item
  Let $\Gamma=(V,E)$  be a graph and set
  \begin{align*}
    E_0&:=\gklammer{\Lambda}\\
    E_1&:=V\\
    E_2&:=E\\
    E_n&:=\gklammer{(v_1,\ldots, v_n)\in V^n\mid (v_i,v_{i+1})\in E~\forall i=1,\ldots, n-1}.
  \end{align*}
(That is, $E_n$ consists of all \hl{paths} of length $n-1$.) 

Then $E_n=X_n((V\times V)\setminus E)$. In particular, the $E_n$ form a word system over $V$, the \hl{graph system} $X_\Gamma^\btimes$.

\item
Every word system $X^\btimes$ is  a subsystem of the graph system $X^\btimes_{(X_1,X_2)}$ associated with the graph $(X_1,X_2)$.
\end{enumerate}
\end{theorem}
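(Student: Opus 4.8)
The plan is to treat the two parts separately, both reducing to the defining feature of word systems---closure under subwords (Definition~\ref{def:word_system})---together with the characterization of $X_n(\cdot)$ from Observation~\ref{ob1}.

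For part (1), I would first observe that the excluded set $R:=(V\times V)\setminus E$ consists solely of words of length $2$. Consequently, for a word $w=(v_1,\ldots,v_n)\in V^n$, the only subwords of $w$ that can possibly lie in $R$ are its consecutive pairs $(v_i,v_{i+1})$, $i=1,\ldots,n-1$. Hence $w$ avoids every word of $R$ if and only if $(v_i,v_{i+1})\notin R$, that is $(v_i,v_{i+1})\in E$, for each $i$; this is exactly the defining condition for $w\in E_n$. Reading off the definition of $X_n(R)$ then gives $E_n=X_n(R)$, and the low cases $n=0,1,2$ are checked directly (the empty word and single letters have no length-$2$ subword, so $X_0(R)=\gklammer{\Lambda}=E_0$ and $X_1(R)=V=E_1$, while for $n=2$ the condition reads $(v_1,v_2)\in E$, giving $X_2(R)=E=E_2$). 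That the $E_n$ form a word system is then immediate from Observation~\ref{ob1}.

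For part (2), I would first note that $X^\btimes$ may be regarded as a word system over $X_1$ (the device already used in the proof of Theorem~\ref{theo:local_to_global}), since every letter occurring in a word of $X^\btimes$ is itself a length-one subword and hence lies in $X_1$; in particular every $(a,b)\in X_2$ satisfies $a,b\in X_1$, so $X_2\subset X_1\times X_1$ is a legitimate edge set on the vertex set $X_1$. Writing $(X_\Gamma)_n$ for the $n$-th fiber of the graph system $X^\btimes_{(X_1,X_2)}$ (the paths of the graph $(X_1,X_2)$), it then suffices to verify the fiberwise inclusion $X_n\subset (X_\Gamma)_n$ for every $n$. To prove this, take $w=(v_1,\ldots,v_n)\in X_n$: each consecutive pair $(v_i,v_{i+1})$ is a length-$2$ subword of $w$, so by subword closure it lies in $X_2$, which means $(v_i,v_{i+1})$ is an edge of $(X_1,X_2)$ for every $i$, that is $w\in (X_\Gamma)_n$.

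The argument is largely routine; the only points needing care are bookkeeping rather than genuine obstacles. The main one is being precise about what \emph{subsystem} means (Definition~\ref{CSdefi}): I must confirm that the fiberwise inclusion $X_n\subset(X_\Gamma)_n$ really upgrades to the Cartesian-subsystem condition $i_{m,n}(X_{m+n})\subset (X_\Gamma)_m\times (X_\Gamma)_n$. This holds because both $X^\btimes$ and $X^\btimes_{(X_1,X_2)}$ are word systems over $X_1$ whose structure maps are restrictions of the concatenation inverses of the full word system $X_1^\times$; since $X^\btimes$ is itself a Cartesian subsystem of $X_1^\times$ we have $i_{m,n}(X_{m+n})\subset X_m\times X_n$, and combining this with $X_m\subset (X_\Gamma)_m$ and $X_n\subset (X_\Gamma)_n$ gives the required inclusion. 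The second minor point is precisely the implicit reduction of the alphabet to $X_1$, which legitimizes forming the graph $(X_1,X_2)$ in the first place.
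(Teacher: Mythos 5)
Your proposal is correct and follows essentially the same route as the paper: part (1) is read off directly from the definition of $X_n((V\times V)\setminus E)$ using that the excluded words all have length $2$, and part (2) uses subword closure to place each consecutive pair of a word in $X_n$ into $X_2$, giving the fiberwise inclusion. Your additional checks (the low cases $n=0,1,2$, the reduction to the alphabet $X_1$, and the upgrade from fiberwise inclusion to the Cartesian-subsystem condition) are correct details that the paper leaves implicit.
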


\begin{proof}{~}

\begin{enumerate}
\item
By definition,
\[
X_n((V\times V)\setminus E)
:=
\gklammer{(v_1,\ldots, v_n)\in V^n\mid (v_i,v_{i+1})\notin(V\times V)\setminus E~\forall i=1,\ldots, n-1}=E_n.
\]
\item
For each word $(v_1,\ldots,v_n)\in X_n$, the $(v_i,v_{i+1})$ are subwords, hence belong to $X_2$. So $X_n$ is a subset of $E_n$.
\end{enumerate}
\end{proof}
\noindent
Of course, $(V\times V)\setminus E$ is reduced. So graph systems are precisely those word systems which have a reduced set of excluded words consisting only of words of length $2$.

Let $\Gamma=(V,E)$ be a graph  with
$V=\gklammer{1,\ldots,d}$. Denote by
\begin{equation*}
A_{ij}=
\begin{cases}
  1&\text{for $(i,j)\in E$,} \\
  0&\text{else}
\end{cases}
\end{equation*}
in $M_{d}$ its \hl{adjacency matrix}.
Then, obviously, the number of
paths  of length $n$ from $i$ to $j$ is given by the $i$-$j$-entry of
$A^n$. 
For any matrix $B\in M_{d}$ denote by
\begin{equation*}
   S (B):=\sum_{i,j=1,\ldots,d}B_{i,j}
\end{equation*}
 the sum of all its entries. So $S(A^{n-1})=\# E_n$.
Denote by  $\Eins_{m\times n}$ the $m\times n$ matrix with all entries equal
to $1$ and put $\Eins_d:=\Eins_{d\times 1}\in M_{d,1}=\C^d$. Note that in this notation $\Eins_{d\times d}\Eins_d=d\Eins_d$.

\begin{example}\label{sec:subpr-syst-cart:example_rand}
  Let $(V,E)$ be the graph with $d+1$ vertices and adjacency matrix $A=
  \sMatrix{
    0&\Eins_d^t\\ 
    \Eins_d&0
  }$. 
  Then $\# E_2= S (A)=2d$. Since $A^2=
\sMatrix{
  d&0\\
  0&\Eins_{d\times d}
}
$ we have $\# E_3= S(A^2)=d^2+d$.
\end{example}

\begin{example}\label{sec:subpr-syst-cart:example_wenig_Kanten}

Let $(V,E)$ be a graph with $\#E=1$, so $E=\gklammer{(v,w)}$. If $v\neq w$, then for $n>2$ there is no path of length $n-1$, so $E_n=\emptyset$. If $v=w$, then $E_n=\gklammer{(v,v,\ldots,v)}$, so, $\#E_n=1$ .

Let $(V,E)$ be a graph with $\#E=2$, that is, its adjacency matrix $A$ is the sum of two distinct matrix units $E_{ij}$ and $E_{kl}$. We find
\begin{equation*} \#E_3= S(A^2)= S((E_{ij}+E_{kl})^2)=\delta_{ij}+\delta_{il}+\delta_{kj}+\delta_{kl}.
\end{equation*}
Since three of the equalities $i=j,i=l,k=j$ and $k=l$ necessarily lead to $i=j=k=l$, necessarily $\#E_3\leq 2$. Since every word system is a subsystem of its graph system, the implication $\#X_2\leq2\Rightarrow \#X_3\leq 2$ holds for \it{all} word systems. In other words, if we define
\begin{equation*}
f(d_1,d_2):=
\begin{cases}
  2&d_2\leq 2\\
  d_1^3&\text{otherwise},
\end{cases}
\end{equation*}
then $d_3\leq f(d_1,d_2)$ for all cardinality sequences of word systems. By Corollary~\ref{cor:shift-condition}, \[d_4=d_{3+1}\leq f(d_{1+1},d_{2+1})=f(d_2,d_3)=2\] because $d_3\leq 2$, and so forth. Hence $\#E_2\leq 2$ implies $\#E_{2+k}\leq 2$ for all $k\in\N_0$.
\end{example}

\begin{observation}
  A straightforward calculation gives
  \begin{align*}
      S(\Eins_{d\times d}A)= S(A\Eins_{d\times d})=d S(A)
  \end{align*}
  for all $A\in M_d$.
  Put $\overline{A}:=\Eins_{d\times d}-A$. Combining the two equations
  \begin{align*}
     S(\overline{A}A)&= S((\Eins_{d\times d}-A)A)= S(\Eins_{d\times d}A-A^2)=d S(A)- S(A^2)\\
  \intertext{and}
     S(\overline{A}A)&= S(\overline{A}(\Eins_{d\times d}-\overline{A}))= S(\overline{A}\Eins_{d\times d}-\overline{A}^2)=d S(\overline{A})- S(\overline{A}^2),
  \end{align*}
  we get
  \begin{equation}\label{eq:6}
    S(A^2)= S(\overline{A}^2)+d( S(A)- S(\overline{A})).
  \end{equation}
\end{observation}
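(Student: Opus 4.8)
The plan is to reduce the whole observation to a single, trivially verified representation of the entry-sum functional, namely $S(B)=\Eins_d^t B\,\Eins_d$, together with the linearity of $S$. Once these are in hand, every displayed equality in the statement is a one-line mechanical computation. Since $S(B)=\sum_{i,j}B_{ij}$, additivity $S(B+C)=S(B)+S(C)$ (indeed $\R$-linearity) is immediate, and $\Eins_d^t B\,\Eins_d=\sum_{i,j}B_{ij}=S(B)$ holds because $\Eins_d$ is the all-ones column vector.

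First I would establish the auxiliary line $S(\Eins_{d\times d}A)=S(A\Eins_{d\times d})=d\,S(A)$. Using the representation above and the identity $\Eins_{d\times d}\Eins_d=d\Eins_d$ already recorded before the observation (whose transpose reads $\Eins_d^t\Eins_{d\times d}=d\Eins_d^t$, as $\Eins_{d\times d}$ is symmetric), I get
\[
S(\Eins_{d\times d}A)=\bigl(\Eins_d^t\Eins_{d\times d}\bigr)A\,\Eins_d=d\,\Eins_d^tA\,\Eins_d=d\,S(A),
\qquad
S(A\Eins_{d\times d})=\Eins_d^tA\bigl(\Eins_{d\times d}\Eins_d\bigr)=d\,\Eins_d^tA\,\Eins_d=d\,S(A).
\]
Next I would compute $S(\overline A A)$ in the two ways suggested by $\overline A:=\Eins_{d\times d}-A$. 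Expanding the left factor gives $\overline A A=\Eins_{d\times d}A-A^2$, so by additivity of $S$ and the auxiliary line $S(\overline A A)=d\,S(A)-S(A^2)$; writing instead $A=\Eins_{d\times d}-\overline A$ and expanding the right factor gives $\overline A A=\overline A\,\Eins_{d\times d}-\overline A^2$, whence $S(\overline A A)=d\,S(\overline A)-S(\overline A^2)$.

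Finally I would equate the two expressions and rearrange:
\[
d\,S(A)-S(A^2)=d\,S(\overline A)-S(\overline A^2)
\quad\Longrightarrow\quad
S(A^2)=S(\overline A^2)+d\bigl(S(A)-S(\overline A)\bigr),
\]
which is exactly \eqref{eq:6}. There is no genuine obstacle here; the only points requiring care are that $S$ is linear, so that it may be distributed across the matrix differences $\Eins_{d\times d}A-A^2$ and $\overline A\,\Eins_{d\times d}-\overline A^2$, and that one invokes $\Eins_{d\times d}\Eins_d=d\Eins_d$ rather than multiplying matrices entrywise.
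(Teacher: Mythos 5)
Your proof is correct and follows essentially the same route as the paper: compute $S(\overline{A}A)$ in two ways by expanding the left factor as $\Eins_{d\times d}-A$ and the right factor as $\Eins_{d\times d}-\overline{A}$, then equate. The only addition is your explicit justification of the auxiliary identity $S(\Eins_{d\times d}A)=S(A\Eins_{d\times d})=d\,S(A)$ via $S(B)=\Eins_d^t B\,\Eins_d$, which the paper leaves as ``a straightforward calculation.''
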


\begin{example}\label{sec:subpr-syst-cart:example_keine_18}
  Let $\Gamma=(V,E)$ be a graph with $3$ vertices and $7$ edges. For its adjacency matrix $A$ we, thus, have $ S(A)=7$ and $ S(\overline{A})=2$. Note that $\overline A$ is the adjacency matrix of the \hl{complementary graph} $\overline{\Gamma}:= (V,(V\times V)\setminus E)$. Therefore, by Example~\ref{sec:subpr-syst-cart:example_wenig_Kanten}, we obtain $ S(\overline{A}^2)\leq2$. So \eqref{eq:6} yields
\begin{equation*}
  \# E_3
  = S(A^2)
  = S(\overline{A}^2)+d( S(A)- S(\overline{A}))
  \leq 2+3(7-2)=17.
\end{equation*}
This shows that a graph with $3$ vertices and $7$ edges has at most $17$ paths of length two.
\end{example}

\begin{example}
  In a graph with $d$ vertices and $d^2-1$ edges we have $ S(A)=d^2-1$, $ S(\overline{A})=1$ and $ S(\overline{A}^2)$ is either $1$ or $0$, depending on whether the missing edge is a loop or not. Using again \eqref{eq:6}, we find
  \begin{align*}
    \# E_3= S(\overline{A}^2)+d(d^2-2)=
    \begin{cases}
      d^3-2d&\text{if the missing edge is a loop,}\\
      d^3-2d+1&\text{if the missing edge is not a loop.}
    \end{cases}
  \end{align*}
\end{example}

We learn from these examples that the condition $\sqrt[m+1]{d_{m+1}}\leq\sqrt[m]{d_{m}}$ is neither
  sufficient nor necessary for the existence of a Cartesian system
  $X^{\btimes}$ with $\# X_n=d_n$. By Example~\ref{sec:subpr-syst-cart:example_keine_18} there is no system with
  $\#X_1=3, \#X_2=7, \#X_3=18$. But  $18^2=324<343=7^3$. So, the sequence $d_1=3,d_2=7,d_3=18,d_n=0$ for $n>3$ fulfills the condition. So the condition is not sufficient. In Example~\ref{sec:subpr-syst-cart:example_rand}, putting $d=10$, we get a system with $\#X_1=11,
  \#X_2=20, \#X_3=110$. But $110^2>10000>8000=20^3$. So the condition is not necessary. 
As mentioned in the beginning of this section, this implies that the stronger condition \eqref{eq:10} is not necessary either. 

Especially in view of Theorem~\ref{theo:local_to_global}, the following class of questions is interesting: Fixing (some of) the cardinalities $\#X_1,\ldots,\#X_n$, what is the maximal possibility for $\#X_{n+1}$ in a word system $X^\btimes$? The question, which graph with $d_1$ vertices and $d_2$ edges has the maximal number of paths of length $2$, is clearly of the above type with $n=2$. It was first investigated by Katz in \cite{Kat71}, who gave an answer only for special values of $d_1$ and $d_2$. A complete answer was given by Aharoni in \cite{Aha80}  by exhibiting four special types of graphs, (two of them are close to being complete graphs, two of them are close to being complements of complete graphs) one of which is maximal for any choice of $d_1$ and $d_2$. This allows one to determine the maximal $d_3$ such that there is a word system $X^\btimes$ with $\#X_1=d_1,\#X_2=d_2$ and $\#X_3=d_3$. Similar results for undirected graphs can be found in \cite{AhKa78},\cite{PPS99} and \cite{AFNW09}. It seems, the questions for higher $n$ are still open problems.

\lf
We close by briefly mentioning a relation to operator algebras which we do not address here, but which promises to deepen the connection between subproduct systems and graphs. With a subproduct system $H^\botimes=\bfam{H_n}_{n\in\N_0}$ we can associate its \hl{Fock space} $\sF(H^\botimes):=\bigoplus_{n\in\N_0}H_n$. For each $x\in H_1$, we define the \hl{creation operator} by $\ell^*(x)x_n=xx_n$. Apart from non-selfadjoint operator algebras, Davidson, Ramsey, and Shalit analyzed the \nbd{C^*}algebras generated by $\ell^*(x)$ (the so-called \it{Toeplitz algebras}) and certain \it{universal} \nbd{C^*}algebras (the so-called \it{Cuntz algebras}) for subproduct systems. The Fock space $\sF(H^\botimes)$ is a special instance of a so-called \it{interacting Fock space} (Accardi, Lu, and Volovich \cite{ALV97}), and since Accardi and Skeide \cite{AcSk00a} it is known that the Toeplitz algebras of interacting Fock spaces are subalgebras of the Pimsner-Toeplitz algebra on a suitable full Fock module (Pimsner \cite{Pim97}). On the other hand, graphs are associated with \it{graph \nbd{C^*}algebras} that can be viewed as quotients of certain Pimsner-Toeplitz algebras. One may show that the Pimsner-Toeplitz algebra of a finite graph with no double edges coincides with the Pimsner-Toeplitz algebra of the subproduct system associated with that graph. This result and the relation of the Pimsner-Toeplitz algebras (and also the associated universal \it{Cuntz-Pimsner algebras}) of subproduct systems of general word systems with those of the containing graph are discussed in Gerhold and Skeide \cite{GeSk13p}.

\lf\lf\lf\noindent
\bf{Acknowledgments.}
We gratefully acknowledge the support of our Departments for supporting a number of mutual visits. We also wish to thank Roland Speicher for inviting us to Saarbrücken and for useful discussions. M.~Gerhold acknowledges funding from the German Research Foundation (DFG), project no.\,397960675.

\newcommand{\Swap}[2]{#2#1}\newcommand{\Sort}[1]{}\def\cprime{$'$}

\linespread{1}
\setlength{\parindent}{0pt}

\end{document}